\documentclass[10pt]{article}

\usepackage{amsmath,amsfonts,amssymb,amsthm}
\usepackage{rotating}
\usepackage{blkarray, bigstrut}
\usepackage{epsfig}
\usepackage{enumitem}
\usepackage{xcolor} 
\usepackage{subdepth} 
\usepackage{mathrsfs}
\usepackage{multirow}

\usepackage{caption}
\usepackage{float}
\usepackage{etex}

\usepackage{tikz}
\usetikzlibrary{arrows,matrix}

\usepackage[hidelinks]{hyperref}
\usepackage{amsmath}
\usepackage{eqnarray,amsmath}
\usepackage{amssymb}
\usepackage{amsfonts}
\usepackage[usenames,dvipsnames]{pstricks}
\usepackage{graphicx}
\usepackage{subcaption}
\usepackage[labelformat=parens,labelsep=quad,skip=3pt]{caption}
\usepackage{qtree}
\usepackage{algorithm}
\usepackage[noend]{algpseudocode}
\usepackage[utf8]{inputenc}
\usepackage{amsthm}
\usepackage{pstricks-add,}
\usepackage{mathtools}
\usepackage{dirtytalk}

\usepackage{xcolor}
\usepackage{hyperref}

\usepackage[english]{babel} 
\usepackage[pangram]{blindtext}

\usepackage{array}
\usepackage{rotating}
\usepackage{tikz}
\usepackage{blkarray}

\usepackage{MnSymbol} 

\usepackage{bbm}

\hypersetup{colorlinks=false,linkbordercolor=red,linkcolor=green,pdfborderstyle={/S/U/W 1}}

\newtheorem{theorem}{Theorem}[section]

\newtheorem{proposition}[theorem]{Proposition}

\newtheorem{corollary}[theorem]{Corollary}
\newtheorem{example}[theorem]{Example}

\usepackage{amsmath,amsfonts,amssymb,amsthm}
\usepackage{rotating}
\usepackage{blkarray, bigstrut}
\usepackage{epsfig}
\usepackage{enumitem}
\usepackage{xcolor} 
\usepackage{subdepth} 
\usepackage{mathrsfs}
\usepackage{multirow}

\usepackage{tikz}
\usetikzlibrary{arrows,matrix}

\usepackage{caption}
\usepackage{float}

\usepackage{makecell}
\newcolumntype{V}{!{\vrule width 1pt}}

\usepackage{makecell}
\newcolumntype{V}{!{\vrule width 1pt}}

\begin{document}

\title{A matrix approach to the structure, enumeration, and applications of partially ordered sets\thanks{This work was supported by the National Research Foundation of Korea(NRF) Grant funded by the Korean Government (MSIT)(RS-2025-00573047).}}
\author{Gi-Sang Cheon$^{a}$\thanks{Corresponding author}, Hong Joon Choi$^{b}$, Gukwon Kwon$^{a}$, Hojoon Lee$^{a}$, Yaling Wang$^{c}$\\
{\footnotesize $^a$ \textit{Department of Mathematics, Sungkyunkwan
University (SKKU), Suwon 16419, Rep. of Korea}}\\
{\footnotesize $^{b}$ \textit{Department of Mathematics, Jeju National University, Jeju 63243, Rep. of Korea
}}\\
{\footnotesize $^{c}$ \textit{Department of Mathematics, Shanxi Key Laboratory of Cryptography and Data Security,}}\\ 
{\footnotesize\textit{ Shanxi Normal University, Taiyuan 030031, P.R. China}}\\
{\footnotesize gscheon@skku.edu, hjchoi@jejunu.ac.kr, tomy1995@skku.edu, hojoon1101@g.skku.edu, wang-yaling@hotmail.com
}}

\date{}
\maketitle

\begin{abstract}
We present a matrix-theoretic approach for studying and enumerating finite posets through their incidence representations, referred to as poset matrices.
Naturally labelled posets are encoded as Boolean lower triangular matrices, allowing a unified treatment of Birkhoff’s problem on non-isomorphic posets and Dedekind’s problem on antichains.
A key idea is a systematic construction and indexing of poset matrices as principal submatrices of the binary Pascal matrix, leading to new structural insights through permutation similarity and domination relations. This approach provides a consistent matrix-based perspective on classical enumeration problems in poset theory.
\end{abstract}

\setlength{\baselineskip}{20pt}

\vskip1pc \noindent\textit{AMS classifications}: 05A15, 06A07, 15B34.

\noindent \textit{Key words}: Partial order, poset matrix, binary Pascal matrix, Birkhoff's problem, Dedekind's problem.

\section{Introduction}

A classical enumeration problem posed by Birkhoff \cite{Birkhoff} in 1948  asks for the number of non-isomorphic posets (NIPs) with $n$ elements.
Despite extensive study, this problem remains open, with exact values only known up to $n=16$ \cite{Brinkmann}.
Another fundamental counting problem is Dedekind’s problem \cite{Dedekind}, proposed in 1897, which asks for the number of monotone Boolean functions on $n$ variables, equivalently the number of antichains in the $n$-dimensional Boolean lattice ${\mathbb B}_n$. This problem has also been solved only up to $n=9$ \cite{Jakel}.
These two problems have motivated a wide range of combinatorial and algorithmic approaches in poset theory.

In graph theory, a fruitful interaction between graph properties and matrix representations—such as adjacency, incidence, and Laplacian matrices—has led to powerful algebraic and algorithmic tools.
Although every finite poset can be represented by its Hasse diagram, which is a directed acyclic graph, matrix-based approaches to posets have remained comparatively underdeveloped.
This contrast suggests that a systematic matrix-theoretic approach for posets may provide new structural insights and effective enumeration methods.

The aim of this paper is to develop such a method by studying {\it naturally labelled posets} through their matrix representations, referred to as poset matrices.
A poset can be naturally encoded by a Boolean matrix recording its partial order relations, in a manner analogous to incidence matrices in graph theory.
This perspective allows the application of Boolean matrices to the structural analysis and enumeration of posets.

Throughout this paper, let $X_n=\{0,1,\ldots,n-1\}$ and let ${\mathbb B}=\{0,1\}$ denote the Boolean algebra with operations
\[
a+b=\max\{a,b\}, \qquad a\cdot b=\min\{a,b\},
\quad a,b\in{\mathbb B}.
\]
A $(0,1)$-matrix over ${\mathbb B}$ is called a Boolean matrix.
For a poset $P=(X_n,\preceq)$, the \emph{poset matrix} \cite{Cheon1,Moha} of $P$ is the $n\times n$ Boolean matrix
$A=[a_{ij}]_{i,j\ge 0}$ defined by
\[
a_{ij}=
\begin{cases}
1, & \text{if } j\preceq i,\\
0, & \text{otherwise}.
\end{cases}
\]
The matrix $A$ is reflexive ($a_{ii}=1$ for all $i$), antisymmetric (if $a_{ij}=1$ and $i\neq j$, then $a_{ji}=0$), and transitive (if $a_{ij}=1$ and $a_{jk}=1$, then $a_{ik}=1$).

It is known \cite{kim} that a unit Boolean matrix $A$ is a poset matrix if and only if it is Boolean idempotent, that is, $A^2=A$.
Although posets have been extensively studied from combinatorial and order-theoretic viewpoints, relatively few works investigate them systematically via poset matrices.
In this paper, we show that poset matrices provide a natural and effective tool for the classification and enumeration of finite posets.

The paper is organized as follows.
In Section~2, we study poset matrices of naturally labelled posets and their equivalence under permutation similarity.
In Section~3, we prove that every poset matrix of order $n$ can be realized as an induced principal submatrix of the binary Pascal matrix of order $2^n$.
Section~4 introduces domination relations and Pascal-equivalence classes, and develops their role in the enumeration of non-isomorphic posets.
In Section~5, we investigate duality on induced Pascal submatrices and describe its action on index vectors.
Finally, Section~6 is devoted to an extension of Dedekind’s problem, interpreting the enumeration of antichains of the Pascal poset as solutions $x\in{\mathbb B}^n$ of the Boolean fixed point equation $x{\bf P}_n  = x$ associated with the Pascal matrix ${\bf P}_n$.

\section{Naturally labelled posets and their poset matrices}
 
A partial order $\preceq$ on $X_n$ is said to be {\it natural} if $x \preceq y $ implies $x \le y$. We refer to such posets as {\it naturally labelled} posets (shortly, NL posets) \cite{Avann, Bevan, Dean}. 
The counting sequence for NL posets on $X_n$ is known up to $n=12$
(OEIS A006455 \cite{OEIS}), and their asymptotic enumeration has been studied
in \cite{Bright}.

Notably, every finite partial ordering can be isomorphically represented as a natural partial ordering. This follows from the Szpilrajn extension theorem \cite{Szp}, which asserts that any partial order can be extended to a total order.

\begin{proposition}({\cite{Dean}})\label{Dean}
Every finite poset is isomorphic to a naturally labelled poset.
\end{proposition}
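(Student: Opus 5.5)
The plan is to build a linear extension of the given poset and use its positions as the new labels. Let $P=(S,\le_P)$ be a finite poset with $|S|=n$. First I produce a total order $\le_T$ on $S$ that extends $\le_P$, meaning $x\le_P y$ implies $x\le_T y$. This is the finite case of the Szpilrajn extension theorem \cite{Szp}. To keep the argument self-contained, I would instead prove it by induction on $n$:
\begin{itemize}
\item every nonempty finite poset has a minimal element $m$, since otherwise a descending chain would repeat an element and contradict antisymmetry;
\item by induction, extend the order on $S\setminus\{m\}$ to a total order;
\item place $m$ below everything. Any $x\le_P m$ forces $x=m$ by minimality, so no relation is violated.
\end{itemize}

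Next, list the elements in $\le_T$-increasing order as $s_0<_T s_1<_T\cdots<_T s_{n-1}$. Define $\varphi:S\to X_n$ by $\varphi(s_i)=i$. Transport the order by declaring $i\preceq j$ if and only if $s_i\le_P s_j$. This is a partial order on $X_n$, and $\varphi$ is by construction an order isomorphism from $P$ onto $(X_n,\preceq)$.

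Finally, check naturality. If $i\preceq j$, then $s_i\le_P s_j$, so $s_i\le_T s_j$. Since $\varphi$ is monotone from $(S,\le_T)$ onto $(X_n,\le)$, this gives $i\le j$. Hence $(X_n,\preceq)$ is naturally labelled and isomorphic to $P$.

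The only nontrivial step is the existence of the linear extension. The induction above makes it elementary, and its key ingredient is the existence of a minimal element in a finite poset. In terms of poset matrices, the statement says that the poset matrix of $P$ is permutation similar to a lower triangular Boolean matrix, with the permutation given by the linear extension.
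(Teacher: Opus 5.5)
Your proof is correct and takes essentially the paper's route. The paper gives no separate proof: it cites Dean and remarks that the result follows from the Szpilrajn extension theorem, i.e.\ relabel the elements by their positions in a linear extension, which is exactly what you do, and your induction via a minimal element just makes the finite case of Szpilrajn's theorem self-contained.
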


Two posets $P$ and $Q$ are isomorphic, denoted $P\cong Q$, if there exists an order-preserving bijection $\theta: P \rightarrow Q$ whose inverse is order-preserving; that is, $x \preceq y$ in $P$ if and only if $\theta(x) \preceq \theta(y)$ in $Q$.
Proposition~\ref{Dean} plays a fundamental role in the enumeration of finite posets.
It guarantees that every finite poset admits a naturally labelled representative. As a consequence, the problem of counting non-isomorphic posets
can be reduced to the enumeration of naturally labelled posets modulo relabelling.
This observation allows us to work within a fixed ground set $X_n=\{0,\ldots,n-1\}$ and to translate poset enumeration problems into the classification of poset matrices up to permutation similarity.

Let $\mathcal{NL}(n)$ denote the set of naturally labelled (NL) posets on $X_n$.
By definition, if $A$ is the poset matrix of an NL poset $P\in\mathcal{NL}(n)$ then it is an $n\times n$ $(0,1)$-matrix
that is lower triangular with all diagonal entries equal to $1$. In this representation, reflexivity and antisymmetry are automatically established by the unit lower triangular form of $A$.
Consequently, $A$ is the poset matrix of some NL poset on $X_n$
if and only if $A$ is an $n\times n$ Boolean matrix that is unit lower triangular
and transitive.
The transitivity condition admits a simple matrix-theoretic characterization \cite{Bevan}:
$A$ contains no submatrix of the form
\[
\begin{pmatrix}
1 & \mathbf{1} \\
0 & 1
\end{pmatrix},
\]
where the bold entry lies on the main diagonal.

From now on, all poset matrices are assumed as Boolean matrices in unit lower
triangular form associated with naturally labelled posets. There is a natural bijection between NL posets on $X_n$ and their poset matrices.
More precisely, we have the following proposition.

\begin{proposition}[\cite{Bevan}]\label{Bevan}
There is a one-to-one correspondence between NL posets on $X_n$ and
$n\times n$ poset matrices.
\end{proposition}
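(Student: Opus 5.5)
The plan is to exhibit the map explicitly and check that it is a bijection onto the set of $n\times n$ poset matrices, where by the standing convention a poset matrix is a unit lower triangular Boolean matrix that is transitive. Define $\Phi:\mathcal{NL}(n)\to\{\text{poset matrices of order }n\}$ by sending $P=(X_n,\preceq)$ to $A_P=[a_{ij}]$ with $a_{ij}=1$ iff $j\preceq i$. Going the other way, define $\Psi$ on an $n\times n$ poset matrix $A$ by the relation $j\preceq_A i \iff a_{ij}=1$ on $X_n$.

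First I will show that $\Phi$ lands in the right set. Reflexivity of $\preceq$ gives $a_{ii}=1$. Naturality says $j\preceq i$ implies $j\le i$, so $a_{ij}=0$ whenever $j>i$, and $A_P$ is lower triangular. Transitivity of $\preceq$ gives $a_{ij}=a_{jk}=1\Rightarrow a_{ik}=1$, which is Boolean transitivity. Thus $A_P$ is a poset matrix.

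Second, I will check that $\Psi$ produces an NL poset. Reflexivity follows from $a_{ii}=1$. For antisymmetry, if $a_{ij}=a_{ji}=1$ with $i\ne j$, then one of these entries lies strictly above the diagonal, contradicting lower triangularity. Transitivity follows directly from transitivity of $A$. Naturality holds because $a_{ij}=1$ forces $j\le i$. If one prefers the characterization via the forbidden submatrix $\begin{pmatrix}1&\mathbf 1\\0&1\end{pmatrix}$ from \cite{Bevan}, it can be substituted for the transitivity verification; the two conditions are equivalent.

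Finally, $\Phi$ and $\Psi$ are mutually inverse, since both are defined by the same rule "$a_{ij}=1\iff j\preceq i$". Hence $\Phi$ is a bijection. There is no real obstacle here. The only point needing care is antisymmetry in the $\Psi$ direction, and it is exactly where lower triangularity (that is, naturality) is used. Without naturality the correspondence would instead be with all reflexive, antisymmetric, transitive Boolean matrices.
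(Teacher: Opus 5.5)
Your proposal is correct and takes essentially the paper's approach. The paper gives no proof of its own and cites Bevan for the statement, but the paragraph just before it makes the same argument: with $a_{ij}=1$ exactly when $j\preceq i$, reflexivity, antisymmetry and naturality are absorbed by the unit lower triangular form, and transitivity is the only remaining condition; your explicit inverse $\Psi$ and the antisymmetry check simply write that out in full.
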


Let $\mathcal{PM}(n)$ denote the set of $n\times n$ poset matrices.
By Proposition~\ref{Bevan}, we obtain
$$
|\mathcal{NL}(n)| = |\mathcal{PM}(n)|.
$$
In particular, for each NL poset $P\in\mathcal{NL}(n)$ there exists a unique poset matrix
$A\in\mathcal{PM}(n)$, and conversely.
When it is necessary to specify this correspondence explicitly, we write $A_P$
for the poset matrix associated with $P\in\mathcal{NL}(n)$, and $P_A$ for the NL poset
associated with $A\in\mathcal{PM}(n)$.

It should be emphasized that an unlabeled poset may admit several distinct natural
labelings, each of which induces a different poset matrix, see Fig. 1. 

\newcommand{\pmat}[1]{%
{\scriptsize
$\begin{bmatrix}#1\end{bmatrix}$}%
}

\begin{center}
\setlength{\tabcolsep}{4pt} 
\renewcommand{\arraystretch}{0.9}

{\scriptsize\setlength{\arraycolsep}{2pt}

\begin{tabular}{ccccccc}

\begin{minipage}[t]{0.108\textwidth}\centering
\begin{tikzpicture}[scale=0.5,
 dot/.style={circle,fill=black,inner sep=1.3pt},
 lab/.style={font=\scriptsize}]
\node[dot,label={[lab]below:{$0$}}] at (-0.7,0) {};
\node[dot,label={[lab]below:{$1$}}] at ( 0.0,0) {};
\node[dot,label={[lab]below:{$2$}}] at ( 0.7,0) {};
\end{tikzpicture}

\kern5pt 
\pmat{1&0&0\\ 0&1&0\\ 0&0&1}
\end{minipage}
&

\begin{minipage}[t]{0.108\textwidth}\centering
\begin{tikzpicture}[scale=0.5,
 dot/.style={circle,fill=black,inner sep=1.3pt},
 lab/.style={font=\scriptsize}]
\node[dot,label={[lab]left:{$0$}}]  (b0) at (0,0) {};
\node[dot,label={[lab]right:{$1$}}] (b1) at (1,0) {};
\node[dot,label={[lab]right:{$2$}}] (b2) at (0.5,1.1) {};
\draw (b0)--(b2) (b1)--(b2);
\end{tikzpicture}

\kern5pt
\pmat{1&0&0\\ 0&1&0\\ 1&1&1}
\end{minipage}
&

\begin{minipage}[t]{0.108\textwidth}\centering
\begin{tikzpicture}[scale=0.5,
 dot/.style={circle,fill=black,inner sep=1.3pt},
 lab/.style={font=\scriptsize}]
\node[dot,label={[lab]right:{$0$}}] (c0) at (0,0) {};
\node[dot,label={[lab]left:{$1$}}]  (c1) at (-0.45,1.1) {};
\node[dot,label={[lab]right:{$2$}}] (c2) at (0.45,1.1) {};
\draw (c0)--(c1) (c0)--(c2);
\end{tikzpicture}

\kern5pt
\pmat{1&0&0\\ 1&1&0\\ 1&0&1}
\end{minipage}
&

\begin{minipage}[t]{0.108\textwidth}\centering
\begin{tikzpicture}[scale=0.5,
 dot/.style={circle,fill=black,inner sep=1.3pt},
 lab/.style={font=\scriptsize}]
\node[dot,label={[lab]right:{$0$}}] (d0) at (0,0) {};
\node[dot,label={[lab]right:{$1$}}] (d1) at (0,1) {};
\node[dot,label={[lab]right:{$2$}}] (d2) at (0,2) {};
\draw (d0)--(d1)--(d2);
\end{tikzpicture}

\kern5pt
\pmat{1&0&0\\ 1&1&0\\ 1&1&1}
\end{minipage}
&

\begin{minipage}[t]{0.108\textwidth}\centering
\begin{tikzpicture}[scale=0.5,
 dot/.style={circle,fill=black,inner sep=1.3pt},
 lab/.style={font=\scriptsize}]
\node[dot,label={[lab]left:{$0$}}] (e0) at (0,0) {};
\node[dot,label={[lab]left:{$2$}}] (e2) at (0,1) {};
\node[dot,label={[lab]right:{$1$}}] at (1,0) {};
\draw (e0)--(e2);
\end{tikzpicture}

\kern5pt
\pmat{1&0&0\\ 0&1&0\\ 1&0&1}
\end{minipage}
&

\begin{minipage}[t]{0.108\textwidth}\centering
\begin{tikzpicture}[scale=0.5,
 dot/.style={circle,fill=black,inner sep=1.3pt},
 lab/.style={font=\scriptsize}]
\node[dot,label={[lab]left:{$1$}}] (f1) at (0,0) {};
\node[dot,label={[lab]left:{$2$}}] (f2) at (0,1) {};
\node[dot,label={[lab]right:{$0$}}] at (1,0) {};
\draw (f1)--(f2);
\end{tikzpicture}

\kern5pt
\pmat{1&0&0\\ 0&1&0\\ 0&1&1}
\end{minipage}
&

\begin{minipage}[t]{0.108\textwidth}\centering
\begin{tikzpicture}[scale=0.5,
 dot/.style={circle,fill=black,inner sep=1.3pt},
 lab/.style={font=\scriptsize}]
\node[dot,label={[lab]left:{$0$}}] (g0) at (0,0) {};
\node[dot,label={[lab]left:{$1$}}] (g1) at (0,1) {};
\node[dot,label={[lab]right:{$2$}}] at (1,0) {};
\draw (g0)--(g1);
\end{tikzpicture}

\kern5pt
\pmat{1&0&0\\ 1&1&0\\ 0&0&1}
\end{minipage}

\end{tabular}
}

\vskip 1pc
{Fig.~1. The seven NL posets on $X_3$ and their corresponding poset matrices.}
\end{center}

\begin{theorem}
Let $P$ be an unlabeled poset on $X$ with $|X|=n$. If $A_P$ and $A'_P$ are the poset matrices obtained from different natural labelings on $X_n$ of $P$, then there exists an $n\times n$ permutation matrix $Q$ such that $ Q^{T} A_P Q=A'_P$.
\end{theorem}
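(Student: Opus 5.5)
The plan is to make the relabelling explicit as a permutation of $X_n$ and then compute its effect on the entries of the poset matrix. Fix the unlabeled poset $P=(X,\le_P)$. A natural labeling is a bijection $\lambda: X\to X_n$ such that the induced order $\preceq_\lambda$ on $X_n$, defined by $\lambda(x)\preceq_\lambda\lambda(y)$ iff $x\le_P y$, is natural. Let $\lambda$ and $\mu$ be the two natural labelings giving $A_P$ and $A'_P$. Set $\sigma=\lambda\circ\mu^{-1}$, a permutation of $X_n$. By construction, $i\preceq_\mu j$ if and only if $\sigma(i)\preceq_\lambda\sigma(j)$, so $\sigma$ is an isomorphism from $(X_n,\preceq_\mu)$ to $(X_n,\preceq_\lambda)$. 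Entrywise this reads
\[
(A'_P)_{ij}=(A_P)_{\sigma(i)\sigma(j)}\qquad (i,j\in X_n).
\]
This follows because $(A'_P)_{ij}=1$ iff $j\preceq_\mu i$ iff $\sigma(j)\preceq_\lambda\sigma(i)$ iff $(A_P)_{\sigma(i)\sigma(j)}=1$.

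Next I choose the permutation matrix $Q=[q_{kl}]$ with $q_{kl}=1$ exactly when $k=\sigma(l)$, so that $Qe_l=e_{\sigma(l)}$. Then
\[
(Q^TA_PQ)_{ij}=\sum_{k,l}q_{ki}(A_P)_{kl}q_{lj}=(A_P)_{\sigma(i)\sigma(j)},
\]
because only the term with $k=\sigma(i)$ and $l=\sigma(j)$ survives. This identity holds over $\mathbb B$ as well as over the integers, since each sum contains exactly one nonzero term. Combining it with the entrywise identity above gives $Q^TA_PQ=A'_P$.

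The only delicate step is bookkeeping: I must orient $\sigma$ correctly, as $\lambda\mu^{-1}$ rather than its inverse, and define $Q$ accordingly, so that one gets $Q^TA_PQ$ and not $QA_PQ^T$. If the orientation comes out reversed, I replace $\sigma$ by $\sigma^{-1}$, equivalently $Q$ by $Q^T$. Note that naturality of the labelings is not used beyond guaranteeing that both matrices are lower triangular poset matrices; the similarity argument works for any two labelings.
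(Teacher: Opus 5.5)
Your proof is correct and is essentially the paper's argument. Both define the relabelling permutation between the two natural labelings and conjugate $A_P$ by the corresponding permutation matrix; your $\sigma=\lambda\circ\mu^{-1}$ with $Qe_l=e_{\sigma(l)}$ gives the same $Q$ as the paper's $\sigma=L'\circ L^{-1}$ under its implicit row convention. The paper writes $A_P=\sum_{j\preceq i}E_{i,j}$ and uses $Q_\sigma^TE_{i,j}Q_\sigma=E_{\sigma(i),\sigma(j)}$, whereas you verify $(Q^TA_PQ)_{ij}=(A_P)_{\sigma(i)\sigma(j)}=(A'_P)_{ij}$ entrywise, which makes explicit the orientation of $Q$ that the paper leaves unstated.
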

\begin{proof}
Let $L:X\to X_n$ and $L':X\to X_n$ be different natural labelings of the poset $P$ that produce poset matrices $A_P$ and $A'_P$, respectively. For $x,y\in X$, let $x\preceq y$. By naturally labelling, there exist $j\leq i$ and $v\leq u$ such that $L(x)=j$, $L(y)=i$ and $L'(x)=v$, $L'(y)=u$. Thus $(A_P)_{ij}=1$ and $(A'_P)_{uv}=1$. Define a bijection $\sigma:X_n\to X_n$ by $\sigma(L(x))=L'(x)$ for all $x \in X$, and let $Q_\sigma$ be the $n\times n$ permutation matrix corresponding to $\sigma\in \mathfrak{S}_n$. Using the elementary matrix $E_{i,j}$ with a single $1$ in position $(i,j)$, we have, for all $(i,j)$ with $(A_P)_{ij}=1$,
$$
Q_\sigma^TE_{i,j}Q_\sigma=E_{\sigma(i),\sigma(j)}=E_{u,v}.
$$ 
Since $A=\sum_{j\preceq i} E_{i,j}$ it follows that $Q_\sigma^{T}A_P Q_\sigma=A'_P\in \mathcal{PM}(n)$, which completes the proof.  
\end{proof}

Two poset matrices $A, B \in \mathcal{PM}(n)$ are said to be {\it permutation similar}, written $A\sim B$, if there exits an $n\times n$ permutation matrix $Q$ such that $B=Q^T A Q$.
This relation is an equivalence relation on $\mathcal{PM}(n)$.
We denote by $[A]$ the equivalence class of $A$, and by $\mathcal{PM}(n)/\sim$
the set of all equivalence classes. This equivalence classes correspond precisely to isomorphism classes of NL posets with $n$ elements. 

\begin{theorem}\label{Birk} Let $A,B \in \mathcal{PM}(n)$ be poset matrices associated with the NL posets $P_A$ and $P_B$, respectively.
Then
$$
A \sim B \quad \text{if and only if} \quad P_A \cong P_B .
$$
Moreover, the cardinality $|\mathcal{PM}(n)/{\sim}|$ is equal to the number of
non-isomorphic posets with $n$ elements.
\end{theorem}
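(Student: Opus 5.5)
We argue directly from the entrywise description of permutation similarity. For a permutation $\sigma\in\mathfrak{S}_n$ with permutation matrix $Q_\sigma$, as in the proof of the preceding theorem, $Q_\sigma^T E_{i,j} Q_\sigma = E_{\sigma(i),\sigma(j)}$. Since any matrix is a sum of elementary matrices, this gives
\[
(Q_\sigma^T A Q_\sigma)_{\sigma(i)\sigma(j)} = a_{ij}\quad\text{for all } i,j\in X_n .
\]
Both directions of the equivalence reduce to this identity together with the definition $a_{ij}=1 \iff j\preceq_{P_A} i$.

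($\Rightarrow$) Suppose $B=Q_\sigma^T A Q_\sigma$. Then for all $i,j$ we have $j\preceq_{P_A} i \iff a_{ij}=1 \iff b_{\sigma(i)\sigma(j)}=1 \iff \sigma(j)\preceq_{P_B}\sigma(i)$. So $\sigma:X_n\to X_n$ is a bijection that preserves and reflects the order, and hence $P_A\cong P_B$.

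($\Leftarrow$) Let $\theta:P_A\to P_B$ be an isomorphism, viewed as a permutation $\sigma=\theta$ of $X_n$. Then $b_{\sigma(i)\sigma(j)}=1\iff \sigma(j)\preceq_{P_B}\sigma(i)\iff j\preceq_{P_A} i\iff a_{ij}=1$. Hence $Q_\sigma^TAQ_\sigma$ and $B$ agree in every entry, so $A\sim B$. This is essentially the preceding theorem: $B$ is the matrix of the natural labelling $\theta\circ L$ of the same abstract poset. The only point needing care is consistency of conventions, namely whether $Q_\sigma$ has its ones at $(i,\sigma(i))$ or at $(\sigma(i),i)$. I fix the convention so that the displayed identity holds, which is the one used earlier; the other convention would just replace $\sigma$ by $\sigma^{-1}$.

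For the counting statement, define a map from $\mathcal{PM}(n)/{\sim}$ to isomorphism classes of $n$-element posets by $[A]\mapsto[P_A]$. The ($\Rightarrow$) direction shows this map is well defined, and ($\Leftarrow$) shows it is injective. Surjectivity is the main point, and it relies on Proposition~\ref{Dean}: every $n$-element poset is isomorphic to some NL poset on $X_n$, which by Proposition~\ref{Bevan} is $P_A$ for a unique $A\in\mathcal{PM}(n)$. Therefore $|\mathcal{PM}(n)/{\sim}|$ equals the number of non-isomorphic posets with $n$ elements. I expect no real obstacle beyond checking that the permutation conventions match.
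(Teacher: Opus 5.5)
Your proposal is correct and follows the same idea as the paper's proof: permutation similarity $B=Q_\sigma^{T}AQ_\sigma$ is exactly relabelling the elements by $\sigma$. You make explicit what the paper's brief argument leaves implicit, namely the entrywise identity $(Q_\sigma^{T}AQ_\sigma)_{\sigma(i)\sigma(j)}=a_{ij}$ used in both directions, and the appeal to Propositions~\ref{Dean} and~\ref{Bevan} for surjectivity of $[A]\mapsto[P_A]$ in the counting statement.
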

\begin{proof} Permutation similarity corresponds precisely to relabeling the elements of a poset.
Thus, two poset matrices are permutation similar if and only if the corresponding
NL posets are isomorphic.
Each equivalence class $[A]$ therefore consists of all poset matrices arising from
the natural labelings of a fixed unlabeled poset. This completes the proof.
\end{proof}

By Theorem~\ref{Birk}, Birkhoff’s question on enumerating the number of
non-isomorphic posets with $n$ elements can be reformulated in matrix-theoretic terms as follows:

\medskip
\noindent
\centerline{``What is the number of non-equivalent $n\times n$ poset matrices over
the Boolean algebra $\mathbb{B}$?"}

\noindent This reformulation naturally leads to the following fundamental question.
\begin{quote}
`` How can one effectively determine or construct non-equivalent
poset matrices?"
\end{quote}
In the subsequent sections, we answer this question by introducing equivalence relations based on permutation similarity and domination relations.

\section{Induced poset matrices of the binary Pascal matrix}

An essential example of a poset matrix in $\mathcal{PM}(n)$ is the
$n\times n$ \emph{binary Pascal matrix}, denoted by ${\bf{P}}_n$.
It is obtained from the Pascal matrix by taking modulo~$2$:
for all $i,j\in\{0,1,\ldots,n-1\}$,
$$
({\bf{P}}_n)_{i,j}\equiv \binom{i}{j}\pmod{2}.
$$
The NL poset associated with ${\bf{P}}_n$, labelled by $0,1,\ldots,n-1$, is called the {\it Pascal poset} on
$X_n$ and is denoted by ${\mathbb{P}}_n$ (see~\cite{Cheon1}).
Each subset of an $n$-element set can be identified with an integer
in $\{0,1,\ldots,2^n-1\}$ via its binary representation.
Consequently, every naturally labelled poset on $X_n$ can be embedded into
the $n$-dimensional \emph{Boolean lattice} $\mathbb{B}_n=(2^{X_n},\subseteq)$, consisting of all subsets of $X_n$ ordered by inclusion
(see also~\cite{Harzheim}).
In particular, the Boolean lattice $\mathbb{B}_n$ is isomorphic to the
Pascal poset $\mathbb{P}_{2^n}$.
Throughout this paper, the term \emph{Pascal matrix} will always mean the
binary Pascal matrix over the Boolean algebra ${\mathbb B}$.

In this section, we show that every poset matrix in $\mathcal{PM}(n)$
arises as an induced principal submatrix of the Pascal matrix
${\bf{P}}_{2^n}$. Denote by $Q_{k,n}$ the collection of all
$k$-element subsets of $X_n=\{0,1,\ldots,n-1\}$. An element $\alpha\in Q_{k,n}$ may be written either as a set $\alpha=\{\alpha_0,\ldots,\alpha_{k-1}\}$ or when an ordering is specified, as a vector 
$\alpha=(\alpha_0,\ldots,\alpha_{k-1})$. For a given matrix $M$, we denote by $M[\alpha]$ or $M[\{\alpha_0,\ldots,\alpha_{k-1}\}]$ the $k\times k$ submatrix of $M$
obtained by taking the rows and columns indexed by the elements of $\alpha$.
In particular, for any $\alpha\in Q_{k,n}$, the matrix
${\bf{P}}_n[\alpha]$ is a $k\times k$ poset matrix. We  refer to ${\bf{P}}_n[\alpha]$ as the
{\it induced principal submatrix} of ${\bf{P}}_n$ associated with the index set $\alpha$.

 {\bf Lucas Theorem.}~For the binary representation of an integer $n\ge1$ of the form $n=\sum_{i\geqslant 0}b_i2^i$ with $b_i\in\{0,1\}$, the {\it support} of $n$ is defined by ${\rm supp}(n):=\{i\mid b_i\ne0\}$. The {\it Lucas theorem} \cite{ELU} asserts that
\begin{eqnarray*}\label{Pas}
\binom{i}{j}\;\equiv 1\pmod2~~\text{if and only if} ~{\rm supp}(j)\subseteq {\rm supp}(i).
\end{eqnarray*}

 
 In the following theorem, we show that every poset matrix can be embedded into the Pascal matrix.
 
\begin{theorem}\label{thm1} Every poset matrix $A\in \mathcal{PM}(n)$ is an induced principal submatrix of the Pascal matrix ${\bf{P}}_{2^n}$.
More precisely, there exists an index set $\alpha=(\alpha_0,\ldots,\alpha_{n-1})\in Q_{n,2^n}$ with $2^{i}\le \alpha_i<2^{i+1}$ such that 
$A = {\bf{P}}_{2^n}[\alpha]$ where the index set $\alpha$ is determined by 
\begin{eqnarray}\label{poseteqn}
A(2^0,\ldots,2^{n-1})^T=(\alpha_0,\ldots,\alpha_{n-1})^T.
\end{eqnarray} 
\end{theorem}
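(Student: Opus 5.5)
The plan is to define $\alpha$ explicitly by \eqref{poseteqn} and verify the claim entrywise using the Lucas theorem. Concretely, $A(2^0,\ldots,2^{n-1})^T=(\alpha_0,\ldots,\alpha_{n-1})^T$ (computed over the integers, not over $\mathbb{B}$) means
\[
\alpha_i=\sum_{j=0}^{n-1} a_{ij}2^j=\sum_{j\preceq i}2^j .
\]
Equivalently, ${\rm supp}(\alpha_i)=\{j : a_{ij}=1\}=\{j\in X_n : j\preceq i\}$ is exactly the principal down-set ${\downarrow}i$ of $i$ in $P_A$. This is just the classical embedding $i\mapsto{\downarrow}i$ of a poset into $\mathbb{B}_n$, transported to integers through binary representation.

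The first step is to check the range and distinctness conditions. Since $A$ is unit lower triangular, $a_{ii}=1$ and $a_{ij}=0$ for $j>i$. Hence $i\in{\rm supp}(\alpha_i)\subseteq\{0,\ldots,i\}$, which gives $2^i\le\alpha_i\le 2^{i+1}-1$. In particular $\alpha_0<\alpha_1<\cdots<\alpha_{n-1}$ are distinct elements of $\{0,\ldots,2^n-1\}$. So $\alpha\in Q_{n,2^n}$, and the vector ordering agrees with the increasing ordering. This last point matters, because it makes ${\bf P}_{2^n}[\alpha]$ have its rows and columns in the same order as $A$.

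The second step is to compare entries. By the Lucas theorem, $({\bf P}_{2^n}[\alpha])_{ij}=({\bf P}_{2^n})_{\alpha_i\alpha_j}=1$ if and only if ${\rm supp}(\alpha_j)\subseteq{\rm supp}(\alpha_i)$, that is, ${\downarrow}j\subseteq{\downarrow}i$. We need to show this holds if and only if $j\preceq i$, i.e. $a_{ij}=1$.
\begin{itemize}
\item If $j\preceq i$, then for every $k\preceq j$ transitivity gives $k\preceq i$, so ${\downarrow}j\subseteq{\downarrow}i$.
\item Conversely, if ${\downarrow}j\subseteq{\downarrow}i$, then reflexivity gives $j\in{\downarrow}j\subseteq{\downarrow}i$, so $j\preceq i$.
\end{itemize}
Therefore $A={\bf P}_{2^n}[\alpha]$.

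There is no serious obstacle. The only points needing care are that \eqref{poseteqn} is read as ordinary integer arithmetic rather than Boolean arithmetic, and that the increasing order of the $\alpha_i$ keeps the submatrix correctly indexed. The transitivity and reflexivity of $A$, noted in the introduction, are precisely what the entrywise comparison uses.
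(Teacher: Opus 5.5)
Your proof is correct and follows essentially the same route as the paper. You define $\alpha_i=\sum_j a_{ij}2^j$, get $2^i\le\alpha_i<2^{i+1}$ from unit lower triangularity, and check $a_{ij}=1$ if and only if ${\rm supp}(\alpha_j)\subseteq{\rm supp}(\alpha_i)$ via Lucas' theorem, using transitivity for one direction and $a_{jj}=1$ for the other; writing the supports as principal down-sets ${\downarrow}i$ is just a cleaner packaging of the paper's argument.
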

\begin{proof} Let $A=[a_{ij}]\in\mathcal{PM}(n)$. From the equation (\ref{poseteqn}) we obtain 
\begin{eqnarray}\label{bin rep}
\alpha_i=\sum_{j=0}^{n-1}a_{ij}2^{j},\quad i\in X_n.
\end{eqnarray}
Since $a_{ij}\in\{0,1\}$ for $i>j$, $a_{ii}=1$ and $a_{ij}=0$ otherwise, it follows that $2^{i}\le \alpha_i\le 2^{i+1}-1$ for all $i\in X_n$ and $1\le \alpha_0<\cdots<\alpha_{n-1}\le 2^n-1$. Thus $\alpha\in Q_{n,2^n}$. To show that $A={\bf{P}}_{2^n}[\alpha]$, it suffices to show that for all $i,j\in X_n$,
\begin{eqnarray}\label{bij}
a_{ij}\equiv\binom{\alpha_i}{\alpha_j} \equiv 1\pmod2~~\text{equivalently,} ~~{\rm supp}(\alpha_j)\subseteq {\rm supp}(\alpha_i).
\end{eqnarray}
First let $a_{ij}=1$ for $i\ne j$. If $m\in{\rm supp}(\alpha_j)$ then $a_{jm}=1$ from (\ref{bin rep}). By the transitivity of $A$ we have $a_{im}=1$ so that $m\in{\rm supp}(\alpha_i)$. Thus ${\rm supp}(\alpha_j)\subseteq {\rm supp}(\alpha_i)$. By Lucas property, $\binom{\alpha_i}{\alpha_j}\equiv 1 \pmod 2.$ Next, if $a_{ij}=0$ then $j\notin {\rm supp}(\alpha_i)$. Since $j\in {\rm supp}(\alpha_j)$ it follows that ${\rm supp}(\alpha_j)\nsubseteq {\rm supp}(\alpha_i)$. By Lucas theorem,
$\binom{\alpha_i}{\alpha_j}\equiv 0\pmod2.$ Consequently, (\ref{bij}) holds for all $i,j\in X_n$,
which implies that $A$ is the induced principal submatrix of the Pascal matrix ${\bf{P}}_{2^n}$ indexed by $\alpha$, {\it i.e.}, $A={\bf{P}}_{2^n}[\alpha]$.
\end{proof}

A subposet $Q$ is said to be an {\it induced subposet} of a poset ${P}=(X,\preceq)$ if there exists an injection $f:Q\rightarrow P$ such that, for all $u,v\in Q$, $u\preceq v$ if and only if $f(u)\preceq f(v)$.
 
\begin{corollary}\label{thm2} Every poset with $n$ element is isomorphic to an induced NL poset of the Pascal poset ${\mathbb P}_{2^n}$. Moreover, this induced poset is the poset $(\alpha,\preceq)$ where $\alpha$ is determined by the equation (\ref{poseteqn}). We denote it by $\mathbb{P}_{2^n}[\alpha]$. 
  \end{corollary}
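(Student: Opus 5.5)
The plan is to chain Proposition~\ref{Dean} with Theorem~\ref{thm1}, and then to check that ``induced principal submatrix'' really means ``induced subposet''. Let $P$ be any poset with $n$ elements. By Proposition~\ref{Dean}, $P$ is isomorphic to some NL poset on $X_n$. By Proposition~\ref{Bevan}, that NL poset corresponds to a unique poset matrix $A=A_P\in\mathcal{PM}(n)$. Theorem~\ref{thm1} then gives an index vector $\alpha=(\alpha_0,\ldots,\alpha_{n-1})$ with
\[
A(2^0,\ldots,2^{n-1})^T=(\alpha_0,\ldots,\alpha_{n-1})^T ,
\]
with $\alpha_0<\cdots<\alpha_{n-1}$ and $A={\bf P}_{2^n}[\alpha]$.

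Next I would define $f:X_n\to X_{2^n}$ by $f(i)=\alpha_i$. This map is injective because the $\alpha_i$ are strictly increasing. In the Pascal poset ${\mathbb P}_{2^n}$, by the definition of poset matrix, $\alpha_j\preceq\alpha_i$ holds exactly when $({\bf P}_{2^n})_{\alpha_i,\alpha_j}=1$. This entry is the $(i,j)$ entry of ${\bf P}_{2^n}[\alpha]$, which equals $a_{ij}$, and $a_{ij}=1$ holds exactly when $j\preceq i$ in $P_A$. So $u\preceq v$ in $P_A$ if and only if $f(u)\preceq f(v)$ in ${\mathbb P}_{2^n}$, which is precisely the definition of an induced subposet.

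Finally I would note that $(\alpha,\preceq)$, with the order restricted from ${\mathbb P}_{2^n}$, is naturally labelled once its elements are listed in increasing order $\alpha_0<\cdots<\alpha_{n-1}$. The reason is that ${\mathbb P}_{2^n}$ is itself natural: ${\bf P}_{2^n}$ is unit lower triangular, so $x\preceq y$ implies $x\le y$. The poset matrix of $(\alpha,\preceq)$ is therefore ${\bf P}_{2^n}[\alpha]=A$. This justifies the notation ${\mathbb P}_{2^n}[\alpha]$ and gives $P\cong P_A\cong{\mathbb P}_{2^n}[\alpha]$.

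I do not expect a real obstacle, since the substantive work is in Theorem~\ref{thm1}. The only point that needs care is the bookkeeping of the row/column convention: $a_{ij}=1$ means $j\preceq i$, and this must be read the same way for the submatrix of ${\bf P}_{2^n}$.
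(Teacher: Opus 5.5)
Your proposal is correct and follows essentially the same route as the paper: reduce to an NL poset via Proposition~\ref{Dean}, realize its poset matrix as ${\bf P}_{2^n}[\alpha]$ via Theorem~\ref{thm1}, and read off the order embedding $i\mapsto\alpha_i$ from the entrywise identity (\ref{bij}). You simply spell out the injectivity, the row/column orientation check, and the natural-labelling verification, which the paper leaves implicit.
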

  \begin{proof} Let $P$ be a poset with $n$ elements. By Proposition \ref{Dean} we may assume that $P$ is an NL poset on $X_n$. Thus there exists a unique poset matrix $A_P$. By Theorem \ref{thm1}, there exists $\alpha=(\alpha_0,\ldots,\alpha_{n-1})$ with $2^{i}\le \alpha_i<2^{i+1}$ such that $A_P={\bf{P}}_{2^n}[\alpha]$. Moreover, (\ref{bij}) implies that $P$ is isomorphic to $(\alpha,\preceq)$, which is the NL poset induced by the elements $\alpha_1,\ldots,\alpha_{n}$ of the Pascal poset ${\mathbb P}_{2^n}$. Hence the proof is completed. 
  \end{proof}

\begin{example} {\rm 
There are five non-isomorphic posets with three elements.
By Theorem~\ref{thm1} and Corollary~\ref{thm2}, each of these five posets
can be realized as an induced subposet of the Pascal poset
$\mathbb{P}_8$.
More precisely, their associated poset matrices appear as induced
principal submatrices of the Pascal matrix ${\bf{P}}_8$, and the
corresponding induced subposets of $\mathbb{P}_8$ represent NL posets on three elements, as illustrated in Fig.~2.}
\end{example}

\begin{minipage}{0.4\textwidth}
		\[
		\bf{P}_{8}=
		\begin{bmatrix}
			1&  &  &  &  &  &  &\\
			1& 1&  &  &  &{\rm O } &  &\\
			1& 0& 1&  &  &  &  &\\
			1& 1& 1& 1&  &  &  &\\
			1& 0& 0& 0& 1&  &  &\\
			1& 1& 0& 0& 1& 1&  &\\
			1& 0& 1& 0& 1& 0& 1&\\
			1& 1& 1& 1& 1& 1& 1& 1
		\end{bmatrix}
		\]
	\end{minipage}\quad\quad\quad\quad\quad\quad
	\begin{minipage}{0.4\textwidth}
		${\mathbb P}_8:$\quad\begin{tikzpicture}[scale=0.2, baseline={(current bounding box.center)}]
			\draw (20,0) circle (1.4142135623cm);
			\node at (20,0) {\fontsize{12}{14}\selectfont 0};
			\draw[-] (19,1) to (15,5);
			\draw (14,6) circle (1.4142135623cm);
			\node at (14,6) {\fontsize{12}{14}\selectfont 1};
			\draw[-] (14,7.4142135623) to (14,10.58578644);
			\draw (14,12) circle (1.4142135623cm);
			\node at (14,12) {\fontsize{12}{14}\selectfont 3};
			\draw[-] (15,13) to (19,17);
			\draw (20,18) circle (1.4142135623cm);
			\node at (20,18) {\fontsize{12}{14}\selectfont 7};
			
			\draw[-] (15,7) to (19,11);
			\draw (20,12) circle (1.4142135623cm);
			\node at (20,12) {\fontsize{12}{14}\selectfont 5};
			\draw[-] (20,13.4142135623) to (20,16.58578644);
			
			\draw[-] (20,1.4142135623) to (20,4.58578644);
			\draw (20,6) circle (1.4142135623cm);
			\node at (20,6) {\fontsize{12}{14}\selectfont 2};
			\draw[-] (19,7) to (15,11);
			\draw[-] (21,7) to (25,11);
			
			\draw[-] (21,1) to (25,5);
			\draw (26,6) circle (1.4142135623cm);
			\node at (26,6) {\fontsize{12}{14}\selectfont 4};
			\draw[-] (26,7.4142135623) to (26,10.58578644);
			\draw (26,12) circle (1.4142135623cm);
			\node at (26,12) {\fontsize{12}{14}\selectfont 6};
			\draw[-] (25,13) to (21,17);
			
			\draw[-] (25,7) to (21,11);
			\end{tikzpicture}
	\end{minipage}%
\centerline{Fig. 1: $8\times 8$ Pascal matrix and associated Pascal poset on $[8]$} 
 \vskip.5pc
\noindent For instance, we obtain the induced NL poset $(\{1,3,5\},\prec)$ from the Pascal poset ${\mathbb P}_8$:
     \begin{eqnarray*}
\begin{minipage}[c]{0.12\textwidth}
\centering
	\begin{tikzpicture}[scale=0.1]
	\node at (5,10) {$\bullet$};
	\node at (15.5,10.5) {};
	\draw[-] (5,10) to (9,5);
	\node at (13,10) {$\bullet$};
	\draw[-] (13,10) to (9,5);
	\node at (2.5,10.5) {};
	\node at (9,5) {$\bullet$};
	\node at (6.5,5.5) {};
	\end{tikzpicture} 
		\end{minipage}\cong 
\begin{minipage}[c]{0.12\textwidth}
	\centering
	\begin{tikzpicture}[scale=0.1]
	\node at (5,10) {$\bullet$};
	\node at (15.5,10.5) {2};
	\draw[-] (5,10) to (9,5);
	\node at (13,10) {$\bullet$};
	\draw[-] (13,10) to (9,5);
	\node at (2.5,10.5) {1};
	\node at (9,5) {$\bullet$};
	\node at (6.5,5.5) {0};
	\end{tikzpicture} 
		\end{minipage}
\Rightarrow
\begin{bmatrix}
				1 & 0 & 0 \\
				1 & 1 & 0 \\
				1 & 0 & 1 \\
			\end{bmatrix}\begin{bmatrix}
				1\\
				2^1 \\
				2^2 \\
			\end{bmatrix}=\begin{bmatrix}
				1\\
				3 \\
				5 \\
			\end{bmatrix}~\Rightarrow~
\begin{minipage}[c]{0.12\textwidth}
		\centering
		\begin{tikzpicture}[scale=0.13, baseline={(current bounding box.center)}]
			\draw (20,0) circle (1.4142135623cm);
			\node at (20,0) {\fontsize{12}{14}\selectfont 1};
			\draw[-] (19,1) to (15,5);
			\draw (14,6) circle (1.4142135623cm);
			\node at (14,6) {\fontsize{12}{14}\selectfont 3};
			
			\draw[-] (21,1) to (25,5);
			\draw (26,6) circle (1.4142135623cm);
			\node at (26,6) {\fontsize{12}{14}\selectfont 5};
		\end{tikzpicture}
	\end{minipage}%
\end{eqnarray*}
Similarly, we obtain:
\begin{eqnarray*}
A_1=\begin{bmatrix}
				1 & 0 & 0 \\
				0 & 1 & 0 \\
				0 & 0 & 1 \\
			\end{bmatrix}={\bf{P}_{8}}[\{1,2,4\}]\quad\Leftrightarrow\quad {\mathbb P}_8[\{1,2,4\}]\quad\cong
\begin{minipage}[c]{0.12\textwidth}
	\centering
	\begin{tikzpicture}[scale=0.1]
		\node at (5,5) {$\bullet$};
		\node at (5,8) {};
		\node at (10,5) {$\bullet$};
		\node at (10,8) {};
		\node at (15,5) {$\bullet$};
		\node at (15,8) {};
			\end{tikzpicture}
		\end{minipage}\\
A_2=\begin{bmatrix}
				1 & 0 & 0 \\
				1 & 1 & 0 \\
				0 & 0 & 1 \\
			\end{bmatrix}={\bf{P}_{8}}[\{1,3,4\}]\quad\Leftrightarrow\quad{\mathbb P}_8[\{1,3,4\}]\quad\cong
\begin{minipage}[c]{0.12\textwidth}
	\centering
	\begin{tikzpicture}[scale=0.1]
		\node at (5,5) {$\bullet$};
		\node at (2.5,5.5) {};
		\draw[-] (5,5) to (5,10);
		\node at (5,10) {$\bullet$};
		\node at (2.5,10.5) {};
		\node at (10,5) {$\bullet$};
		\node at (12.5,5.5) {};
			\end{tikzpicture} 
		\end{minipage}\\
A_3=\begin{bmatrix}
				1 & 0 & 0 \\
				1 & 1 & 0 \\
				1 & 0 & 1 \\
			\end{bmatrix}={\bf{P}_{8}}[\{1,3,5\}]\quad\Leftrightarrow\quad {\mathbb P}_8[\{1,3,5\}]\quad\cong 
\begin{minipage}[c]{0.12\textwidth}
	\centering
	\begin{tikzpicture}[scale=0.1]
	\node at (5,10) {$\bullet$};
	\node at (15.5,10.5) {};
	\draw[-] (5,10) to (9,5);
	\node at (13,10) {$\bullet$};
	\draw[-] (13,10) to (9,5);
	\node at (2.5,10.5) {};
	\node at (9,5) {$\bullet$};
	\node at (6.5,5.5) {};
	\end{tikzpicture} 
	\end{minipage}\\
A_4=\begin{bmatrix}
				1 & 0 & 0 \\
				0 & 1 & 0 \\
				1 & 1 & 1 \\
			\end{bmatrix}={\bf{P}}_{8}[\{1,2,7\}]\quad\Leftrightarrow\quad {\mathbb P}_8[\{1,2,7\}]\quad\cong 
\begin{minipage}[c]{0.12\textwidth}
	\centering
	\begin{tikzpicture}[scale=0.1]
		\node at (5,5) {$\bullet$};
		\node at (15.5,5.5) {};
		\draw[-] (5,5) to (9,10);
		\node at (13,5) {$\bullet$};
		\draw[-] (13,5) to (9,10);
		\node at (2.5,5.5) {};
		\node at (9,10) {$\bullet$};
		\node at (6.5,10.5) {};
			\end{tikzpicture}
		\end{minipage}\\
A_5=\begin{bmatrix}
				1 & 0 & 0 \\
				1 & 1 & 0 \\
				1 & 1 & 1 \\
			\end{bmatrix}={\bf{P}_{8}}[\{1,3,7\}]\quad\Leftrightarrow\quad{\mathbb P}_8[\{1,3,7\}]\quad\cong
\begin{minipage}[c]{0.12\textwidth}
	\centering
	\begin{tikzpicture}[scale=0.1]
		\node at (5,5) {$\bullet$};
		\node at (2.5,5.5) {};
		\draw[-] (5,5) to (5,15);
		\node at (5,10) {$\bullet$};
		\node at (2.5,10.5) {};
		\node at (5,15) {$\bullet$};
		\node at (2.5,15.5) {};
			\end{tikzpicture}
		\end{minipage}
\end{eqnarray*}

\centerline{Fig. 2: Non-equivalent poset matrices for $n=3$ and corresponding non-isomorphic posets.} 
\bigskip
Theorem~\ref{thm1} provides a fundamental bridge between the theory of posets and
the combinatorial structure of the Pascal matrix.
By showing that every $n\times n$ poset matrix arises as an induced principal
submatrix of ${\bf P}_{2^n}$, the problem of enumerating non-isomorphic posets
is reduced to the classification of index sets
$\alpha\in Q_{n,2^n}$ up to equivalence relations by permutation similarity.

\section{Domination relations and Pascal-equivalence classes}

In this section, we develop a matrix-theoretic approach to the enumeration of
non-isomorphic posets by introducing the notion of domination relations and Pascal-equivalence classes.
The central idea is to translate the classification of NL posets up to isomorphism into the classification of poset matrices up to
permutation similarity, $A\sim B$.

Let $A,B\in\mathcal{PM}(n)$.
By Theorem~\ref{thm1}, there exist index sets
$\alpha,\beta\in Q_{n,2^n}$ such that $A={\bf{P}}_{2^n}[\alpha]$ and $B={\bf{P}}_{2^n}[\beta]$.
If ${\bf{P}}_{2^n}[\alpha]\sim {\bf{P}}_{2^n}[\beta]$, we say that
$\alpha$ and $\beta$ are {\it Pascal-equivalent}, and write $\alpha\sim_p\beta$.
This defines an equivalence relation on $Q_{n,2^n}$.  The Pascal-equivalence class (or {\it orbit}) of $\alpha\in Q_{n,2^n}$ is $$[\alpha]:=\bigl\{\beta\in Q_{n,2^n}\mid \alpha\sim_p\beta \bigr\},
$$ 
and we denote by $Q_{n,2^n}/\sim_p$ the set of all such equivalence classes. We now define a map
$$
\varphi:\;
\mathcal{PM}(n)/\sim
\;\;\longrightarrow\;\;
Q_{n,2^n}/\sim_p,
\qquad
\varphi([A]) := [\alpha],
$$
where $A={\bf{P}}_{2^n}[\alpha]$. This map is well defined and bijective.
Through the bijection $\varphi$, by Theorem \ref{Birk} Birkhoff's problem of enumerating
NIPs on $X_n$ is reduced to counting the number of Pascal-equivalence classes of index
vectors in $Q_{n,2^n}$.

{\bf Domination relations.} Let $\alpha=(\alpha_0,\ldots,\alpha_{n-1})\in Q_{n,2^n}$.
For each $i=0,\ldots,n-1$, recall that (\ref{bin rep}) is the binary expansion of each $\alpha_i$:
\begin{equation*}\label{binary}
\alpha_i
=
b_{i0}2^0 + b_{i1}2^1 + \cdots + b_{i,n-1}2^{n-1},
\qquad
b_{ij}\in\{0,1\}.
\end{equation*}
The {\it incidence matrix} associated with $\alpha$, denoted by $M_\alpha$, is the
$n\times n$ Boolean matrix defined by $(M_\alpha)_{i,j} := b_{ij}$ for $0\le i,j\le n-1$. Equivalently,
$$
(M_\alpha)_{i,j} =
\begin{cases}
1, & \text{if}\;j\in\operatorname{supp}(\alpha_i),\\
0, & \text{otherwise}.
\end{cases}
$$

Let $R_0,\ldots,R_{n-1}$ denote the row vectors of $M_\alpha$.
By construction, each $R_i$ encodes the support of $\alpha_i$.
A row vector $R_i$ is said to be {\it dominated} by $R_j$ if $R_i \le R_j$, the inequalities being entrywise. Clearly,
\begin{eqnarray*}\label{admissible}
R_i\leq R_j\quad{\text{if and only if}} \quad {\rm supp}(\alpha_i)\subseteq{\rm supp}(\alpha_j).
\end{eqnarray*}
An $n\times n$ Boolean matrix $A$ is said to be {\it domination-equivalent} to $M_\alpha$,  written $A\sim_d M_{\alpha}$, if $A$ and $M_\alpha$ have identical domination relations among their row vectors.

An element $b_{ij}$ of $M_\alpha$ is called {\it changeable} if flipping its value
($0\leftrightarrow 1$) does not create or break any domination relation
between $R_i$ and $R_j$ for all distinct pairs $(i,j)$ of indices.

For instance, let $\alpha=(2,5,9,13)\in Q_{4,16}$.
Then
\begin{eqnarray}\label{Ex2}
M_\alpha =
\begin{pmatrix}
{\color{red}0} & 1 & {\color{red}0} & {\color{red}0} \\
{\color{red}1} & 0 & 1 & 0 \\
{\color{red}1} & 0 & 0 & 1 \\
1 & 0 & 1 & 1
\end{pmatrix}.
\end{eqnarray}
In this case, $R_1\le R_3$ and $R_2\le R_3$.
The five entries highlighted in red are changeable.

\begin{theorem}[Domination operations]\label{domination}
For any $\alpha\in Q_{n,2^n}$, the domination relations among the row vectors of
$M_\alpha$ are preserved under the following operations:
\begin{itemize}
\item [{\rm I.}] Permuting rows or columns of $M_\alpha$;
\item [{\rm II.}] Flipping the changeable elements of $M_\alpha$.
\end{itemize}
\end{theorem}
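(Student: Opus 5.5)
The plan is to fix precisely what ``the domination relations among the row vectors'' means and then check each operation against that meaning. I will encode the relations of $M_\alpha$ as the relation
\[
D(M_\alpha)=\{(i,j): i\neq j,\ R_i\le R_j\}
\]
on row indices, or as the isomorphism type of the preorder it defines when rows are permuted. ``Preserved'' then means the following: after the operation, $D$ equals the original relation, up to the relabelling of row indices the operation induces. In every case the core step is the entrywise characterization $R_i\le R_j \iff \mathrm{supp}(\alpha_i)\subseteq\mathrm{supp}(\alpha_j)$ recorded just before the theorem.

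For operation I, I will handle columns first. Let $\tau$ be a column permutation and write $R\tau$ for the permuted row. Then $R_i\le R_j$ entrywise holds if and only if $R_i\tau\le R_j\tau$ entrywise, because $\le$ is checked coordinate by coordinate and $\tau$ only reorders the coordinates. So $D$ is literally unchanged. For a row permutation $\sigma$, the new row in position $\sigma(i)$ is the old $R_i$. Hence $(\sigma(i),\sigma(j))$ lies in the new relation exactly when $(i,j)$ lies in the old one, so the relation is carried isomorphically by $\sigma$. Combining the two gives the claim for arbitrary $Q_1M_\alpha Q_2$ with permutation matrices $Q_1,Q_2$.

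For operation II, I will argue directly from the definition of a changeable entry. Flipping $b_{ij}$ changes only row $R_i$. So the only pairs whose comparability could change are $(i,k)$ and $(k,i)$ for $k\neq i$. By definition, changeability says that no domination relation between $R_i$ and any other row is created or broken by the flip. Hence $D$ is unchanged after a single flip.

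The main obstacle is flipping several changeable entries, as in the example, where five red entries are flipped together. Changeability is defined relative to the current matrix, so after one flip the remaining entries need not stay changeable. I would try to handle this in one of two ways. The first is to interpret II as a sequence of single flips, each of an entry changeable at that moment, and then conclude by induction on the number of flips. The second is to verify directly that a simultaneous flip preserves $D$ by checking each pair $(R_k,R_l)$. If the statement is meant simultaneously, I would check the example to see whether the two readings agree, and otherwise adopt the sequential reading, which the single-flip argument fully justifies.
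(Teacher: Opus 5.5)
Your argument for operation I and for a single flip in II is the paper's argument: reordering coordinates leaves coordinatewise comparison unchanged, a row permutation only relabels indices, and a changeable flip by definition alters no comparison.

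You are right to worry about several flips, and here you are more careful than the paper. Its proof simply asserts that ``flipping all changeable elements preserves the domination relations,'' and read simultaneously this is false.

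Take the paper's own example $\alpha=(2,5,9,13)$. The entries $b_{00}$ and $b_{02}$ are each changeable in $M_\alpha$. Flipping both at once turns $R_0=(0,1,0,0)$ into $(1,1,1,0)$, which now dominates $R_1=(1,0,1,0)$. Equivalently, once $b_{00}$ has been flipped, $b_{02}$ is no longer changeable.

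So your second route, checking a simultaneous flip pair by pair, cannot work in general. Operation II must be read sequentially: re-check changeability after each flip, as the paper's worked example $M_\beta\to M_\gamma\to M_\omega$ actually does. Under that reading, your single-flip step plus induction on the number of flips is a complete proof.
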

\begin{proof} Fix $\alpha\in Q_{n,2^n}$ and let $R_0,\ldots,R_{n-1}$ be row vectors of $M_\alpha$. Permuting two rows $R_i$ and $R_j$ merely relabels the indices of the row vectors.
Hence domination relations are unchanged up to relabelling. Similarly, permuting two columns $C_i$ and $C_j$ exchanges the $i$th and $j$th coordinates
of every row vector of $M_\alpha$. Since domination relation is defined componentwise, such a coordinate permutation does not affect any domination relation.
Thus, permuting rows or columns of $M_\alpha$ preserves all domination relations, which proves I. 

By definition, a changeable element of $M_\alpha$ can be flipped without altering
any pairwise comparisons between row vectors. Consequently, flipping all changeable elements preserves the domination relations, which proves II.
\end{proof}

If $A$ is obtained from $M_\alpha$ by applying a finite sequence of domination operations, then $A$ is domination-equivalent to $M_\alpha$.
Thus, the domination operations play a fundamental role in identifying and generating the 
Pascal-equivalence class of a given index vector $\alpha\in Q_{n,2^n}$.

\begin{theorem}\label{main} Let $\alpha=(\alpha_0,\ldots,\alpha_{n-1})\in Q_{n,2^n}$ be fixed and assume that $A$ is domination-equivalent to $M_\alpha$.
If $A{\bf v}_n^{T}=\beta^{T}$ where ${\bf v}_n:=(2^0,\ldots,2^{n-1})$, then $\alpha$ and $\beta$ are Pascal-equivalent, equivalently $\beta\in[\alpha]$.
\end{theorem}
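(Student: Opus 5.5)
The plan is to compare the two induced principal submatrices ${\bf P}_{2^n}[\alpha]$ and ${\bf P}_{2^n}[\beta]$ entry by entry, using the Lucas theorem to turn each entry into a domination relation between rows. First I identify the incidence matrix of $\beta$. Since $A$ is a Boolean matrix and $A{\bf v}_n^T=\beta^T$, we have
$$\beta_i=\sum_{j=0}^{n-1}a_{ij}2^j .$$
This is exactly the binary expansion of $\beta_i$, so $M_\beta=A$ and ${\rm supp}(\beta_i)=\{j: a_{ij}=1\}$. In particular $0\le\beta_i\le 2^n-1$.

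By the Lucas theorem, for any index vector $\gamma$,
$$({\bf P}_{2^n}[\gamma])_{ij}=1 \iff {\rm supp}(\gamma_j)\subseteq{\rm supp}(\gamma_i) \iff R^{\gamma}_j\le R^{\gamma}_i ,$$
where $R^\gamma_i$ denotes the $i$th row of $M_\gamma$. So ${\bf P}_{2^n}[\gamma]$ is precisely the matrix recording the domination relation among the rows of $M_\gamma$, and this holds for $\gamma=\alpha$ and for $\gamma=\beta$.

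Next I use the hypothesis $A\sim_d M_\alpha$. As in Theorem~\ref{domination}(I), it means there is a bijection $\sigma$ of $X_n$, the relabelling of rows, such that $R^A_{j}\le R^A_{i}$ if and only if $R^\alpha_{\sigma(j)}\le R^\alpha_{\sigma(i)}$. (When no row permutation is involved, $\sigma$ is the identity.) Combining this with the previous step gives
$$({\bf P}_{2^n}[\beta])_{ij}=({\bf P}_{2^n}[\alpha])_{\sigma(i)\sigma(j)}$$
for all $i,j$. With $Q=Q_\sigma$, this reads ${\bf P}_{2^n}[\beta]=Q^T{\bf P}_{2^n}[\alpha]Q$, which is the same computation with elementary matrices $E_{i,j}$ used in the first theorem of Section~2.

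It remains to check that $\beta$ is a legitimate element of $Q_{n,2^n}$, and I expect this to be the main obstacle.
\begin{itemize}
\item \textbf{Distinct entries.} The $\alpha_i$ are distinct, so the rows of $M_\alpha$ are distinct. Hence no two rows dominate each other mutually. Domination-equivalence preserves this property, so the rows of $A$ are distinct and the $\beta_i$ are distinct.
\item \textbf{Order of the entries.} The entries of $\beta$ need not be increasing. I therefore regard $\beta$ as a set and sort it by a further permutation $\tau$. This replaces ${\bf P}_{2^n}[\beta]$ by a permutation-similar matrix, which is unit lower triangular because the Pascal matrix is. Composing $\sigma$ with $\tau$ shows that the sorted ${\bf P}_{2^n}[\beta]$ is still permutation similar to ${\bf P}_{2^n}[\alpha]$.
\end{itemize}
Both matrices then lie in $\mathcal{PM}(n)$ and are related by $\sim$, so $\alpha\sim_p\beta$, that is, $\beta\in[\alpha]$.

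A secondary point to watch is the case where the row permutation of Theorem~\ref{domination}(I) is combined with column permutations. Column permutations do not change the domination relation, so only the row relabelling $\sigma$ enters, and it is exactly the permutation similarity produced above.
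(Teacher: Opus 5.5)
Your proof is correct and follows essentially the same route as the paper: you identify $A=M_\beta$, use the Lucas theorem to read ${\bf P}_{2^n}[\gamma]$ as the domination relation among the rows of $M_\gamma$, and conclude permutation similarity from the preserved domination relations. You are more careful than the paper, which leaves implicit both the relabelling permutation $\sigma$ and the check that the entries of $\beta$ are distinct, a check needed so that $\beta$, once sorted, really lies in $Q_{n,2^n}$.
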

\begin{proof} Assume that $A\sim_d M_{\alpha}$ and that $A{\bf v}_n^{T}=\beta^{T}.$ By the definition of the incidence matrix of an index vector, the equality
$A{\bf v}_n^{T}=\beta^{T}$ implies that $A=M_{\beta}$. Since $M_\beta$ is the incidence matrix obtained from $M_\alpha$ by a finite sequence of domination operations, 
the vector $\beta=(\beta_0,\ldots,\beta_{n-1})$ is not necessarily ordered increasingly. However, by Theorem~\ref{domination}, all domination relations among the
row vectors are preserved throughout this process. Consequently, the domination relations among the row vectors of $M_\beta$ coincide with those of $M_\alpha$.
Therefore, the induced poset matrices satisfy
$$
{\bf{P}}_{2^n}[\alpha] \sim {\bf{P}}_{2^n}[\beta],
$$
which implies that $\alpha$ and $\beta$ are Pascal-equivalent. Hence $\alpha\sim_p \beta$, completing the proof.
\end{proof}

{\bf Algorithmic interpretation.} Theorem~\ref{main} shows that the domination operations admit an algorithmic interpretation for
computing the Pascal-equivalence class  $[\alpha]$ of a given index vector
$\alpha\in Q_{n,2^n}$. Starting from the incidence matrix $M_\alpha$, one applies all
possible domination operations of type~I and type~II to generate new domination-equivalent matrices $A=M_\beta$ satisfying $M_\alpha\sim_d A$.
For each such matrix $A$, the vector
$$
\beta^T := A\,(2^0,\ldots,2^{n-1})^T
$$
is computed and interpreted as an element of $Q_{n,2^n}$, viewed as an
unordered subset of $\{0,\ldots,2^n-1\}$.
By Theorem~\ref{main}, every vector $\beta$ obtained in
this way satisfies $\beta\sim_p\alpha$.
Hence, the domination operations define a constructive procedure for
enumerating all index vectors in the Pascal--equivalence class $[\alpha]$.
\medskip

For instance, let $M_\alpha$ with $\alpha=(2,5,9,13)\in Q_{4,16}$ be the same matrix in (\ref{Ex2}).
By applying the domination operation {\rm I} to $M_\alpha$, and then the domination
operation {\rm II} to the resulting matrix, Theorem~\ref{domination} yields
\begin{eqnarray*}
M_\alpha =
\begin{pmatrix}
{\red 0} & 1 & 0 & 0\\
{\red 1} & 0 & 1 & 0 \\
{\red 1} & 0 & 0 & 1 \\
1 & 0 & 1 & 1
\end{pmatrix}\;\sim_d\; M_\beta =\begin{pmatrix}
1 & 0 & 0 & {\red0}\\
0 & 1 & 0 & {\red1} \\
0 & 0 & 1 & {\red1} \\
0 & 1 & 1 & 1
\end{pmatrix}\;\sim_d\; M_\gamma =\begin{pmatrix}
1 & 0 & 0 & {\red0}\\
0 & 1 & 0 & {\red0} \\
0 & 0 & 1 & {\red1} \\
0 & 1 & 1 & 1
\end{pmatrix}\;\sim_d\;M_\omega=
\begin{pmatrix}
1 & 0 & 0 & {\red0}\\
0 & 1 & 0 & {\red0} \\
0 & 0 & 1 & {\red0} \\
0 & 1 & 1 & 1
\end{pmatrix},
\end{eqnarray*}
where $\alpha=(2,5,9,13)$, $\beta=(1,10,12,14)$, $\gamma=(1,2,12,14)$, $\omega=(1,2,4,14)$. Consequently, by Theorem~\ref{main}, we have
$$
{\bf{P}}_{2^3}[\alpha]={\bf{P}}_{2^3}[\beta]={\bf{P}}_{2^3}[\gamma]={\bf{P}}_{2^3}[\omega]=\begin{pmatrix}
1 & 0 & 0 & 0\\
0& 1 & 0 & 0 \\
0 & 0 & 1 & 0 \\
0 & 1 & 1 & 1
\end{pmatrix}.
$$

\begin{theorem} Let $\alpha\in Q_{n,2^n}$. Then $M_\alpha$ is domination equivalent to a poset matrix.
That is, there exists a poset matrix $A\in\mathcal{PM}(n)$ such that $M_\alpha \sim_d A$.
\end{theorem}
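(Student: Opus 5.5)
The plan is to read off a partial order on the row indices of $M_\alpha$ from its domination relations and to take $A$ to be the poset matrix of that order, after a natural relabelling. Write $\alpha=(\alpha_0,\ldots,\alpha_{n-1})$ and let $R_0,\ldots,R_{n-1}$ be the rows of $M_\alpha$. By definition $R_i$ is the indicator vector of ${\rm supp}(\alpha_i)$, and domination $R_i\le R_j$ means ${\rm supp}(\alpha_i)\subseteq{\rm supp}(\alpha_j)$.

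First I check that domination is a partial order on $X_n$. Define $i\trianglelefteq j$ if and only if $R_i\le R_j$. This relation is reflexive and transitive because entrywise $\le$ is. It is antisymmetric because the $\alpha_i$ are distinct elements of $Q_{n,2^n}$. Binary expansions are unique, so distinct $\alpha_i$ have distinct supports, and $R_i\le R_j\le R_i$ forces $i=j$. This holds even if some $\alpha_i=0$, since its support is the empty set and $R_i$ is the zero row.

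Next I make the order natural. Following the remark after Proposition~\ref{Dean}, I take a linear extension of $(X_n,\trianglelefteq)$ by Szpilrajn's theorem. Let $\sigma\in\mathfrak S_n$ list the elements in that order, so that $\sigma(k)\trianglelefteq\sigma(l)$ implies $k\le l$. I then define the $n\times n$ Boolean matrix $A=[a_{kl}]$ by
\[
a_{kl}=1 \iff R_{\sigma(l)}\le R_{\sigma(k)} .
\]
By the choice of $\sigma$, $A$ is unit lower triangular. It is transitive because $\trianglelefteq$ is. Hence $A\in\mathcal{PM}(n)$, and it is the poset matrix of the NL poset obtained from $(X_n,\trianglelefteq)$ by relabelling through $\sigma$.

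It remains to compare domination relations. Row $k$ of $A$ is the indicator vector of the down-set $D_k=\{l : \sigma(l)\trianglelefteq\sigma(k)\}$. Because $k\in D_k$ and $\trianglelefteq$ is transitive, we have $D_k\subseteq D_m$ if and only if $k\in D_m$. That is, row $k$ of $A$ is dominated by row $m$ exactly when $R_{\sigma(k)}\le R_{\sigma(m)}$. So the domination relations among the rows of $A$ coincide with those of the matrix obtained from $M_\alpha$ by permuting its rows by $\sigma$. By Theorem~\ref{domination}(I), such a row permutation preserves domination relations up to relabelling, which gives $M_\alpha\sim_d A$.

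The step needing the most care is this last identification. The definition of $\sim_d$ is only implicitly stated up to relabelling of rows, so the argument must make explicit that the row permutation is an allowed type I operation, in the same spirit as the proof of Theorem~\ref{main}. The other essential point is antisymmetry, which is exactly where distinctness of the entries of $\alpha$ is used.
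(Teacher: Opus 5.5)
Your proof is correct and is essentially the paper's argument: both order the rows compatibly with domination, set $a_{kl}=1$ exactly when row $l$ of the reordered $M_\alpha$ is dominated by row $k$, and then check unit lower triangularity, transitivity, and that domination between rows of $A$ reduces to the diagonal index lying in the corresponding down-set. The only difference is the choice of linear extension. The paper uses the concrete order $\alpha_0<\alpha_1<\cdots<\alpha_{n-1}$, which works because ${\rm supp}(\alpha_j)\subseteq{\rm supp}(\alpha_i)$ forces $\alpha_j\le\alpha_i$; you invoke Szpilrajn instead, and you spell out the antisymmetry coming from distinctness of the $\alpha_i$, which the paper leaves implicit.
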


\begin{proof} Let $\alpha=(\alpha_0,\ldots,\alpha_{n-1})$ and let $R_i(M_\alpha)$ denote the $i$th row vectors of $M_\alpha$. By domination operation I, we may assume that $\alpha_0<\alpha_1<\cdots<\alpha_{n-1}$.
Define a Boolean matrix $A=[a_{ij}]$ by
$$
a_{ij}=
\begin{cases}
1, & \text{if } R_j(M_\alpha)\le R_i(M_\alpha),\\
0, & \text{otherwise}.
\end{cases}
$$
We claim that $A$ is the poset matrix which is domination equivalent to $M_\alpha$. Let $R_i=:R_i(M_\alpha)$. Clearly $A$ satisfies $a_{ii}=1$ for all $i$, and $a_{ij}=0$ for all $i,j$ with $i<j$. Thus, $A$ is a unit lower triangular matrix.
Moreover, if $a_{ij}=1$ and $a_{jk}=1$, then $R_j\le R_i$ and $R_k\le R_j$, hence $R_k\le R_i$, implying $a_{ik}=1$. Thus $A$ is transitive. Hence $A\in\mathcal{PM}(n)$ is a poset matrix.

To show $M_\alpha \sim_d A$, we claim that for all $i,j\in X_n$, $R_j(M_\alpha)\le R_i(M_\alpha)$ if and only if $R_j(A)\le R_i(A)$. Indeed, if $R_j(M_\alpha)\le R_i(M_\alpha)$ then $a_{ij}=1$. By the transitivity of $A$, if $a_{jk}=1$ for any $k<j$ then $a_{ik}=1$, which implies $R_j(A)\le R_i(A)$. Conversely, if $R_j(M_\alpha)\not\le R_i(M_\alpha)$ then $a_{ij}=0$. Since $a_{jj}=1$, we have $R_j(A)\not\le R_i(A)$. Therefore $M_\alpha$ and $A$ have identical domination relations among their row vectors. This implies $M_\alpha \sim_d A$, which completes the proof.
\end{proof}

\begin{theorem} All the entries below the main diagonal of a poset matrix $A\in{\cal PM}(n)$ are not changeable.
   \end{theorem}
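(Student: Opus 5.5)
The plan is to fix a poset matrix $A=[a_{ij}]\in\mathcal{PM}(n)$ and an entry $a_{ij}$ with $i>j$. For each such entry I want a pair of distinct rows whose domination relation changes when $a_{ij}$ is flipped. Throughout I use the description of rows already used in the proof that $M_\alpha$ is domination equivalent to a poset matrix: by reflexivity and transitivity, row $R_k$ is the indicator vector of the down-set $\{m : m\preceq k\}$ in $P_A$. In particular, $R_j\le R_i$ if and only if $a_{ij}=1$. I split the argument according to the value of $a_{ij}$.

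\emph{Case $a_{ij}=1$.} Here $j\preceq i$, so by transitivity $R_j\le R_i$. Flipping $a_{ij}$ to $0$ gives a new row $R_i'$ with a $0$ in column $j$, while $R_j$ keeps $a_{jj}=1$. Hence $R_j\not\le R_i'$, and the domination $R_j\le R_i$ is broken. So $a_{ij}$ is not changeable. This case is routine.

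\emph{Case $a_{ij}=0$.} Flipping gives $R_i'=R_i+e_j$. I would first check the pair $(i,j)$. The relation $R_i\le R_j$ holds neither before nor after, since $a_{ji}=0$ for $i>j$ and $a_{ii}=1$. The relation $R_j\le R_i'$ is newly created exactly when $\{m\prec j\}\subseteq\{m\preceq i\}$. This covers the case where $j$ is minimal, and the case where every element strictly below $j$ is also below $i$. Second, I would examine the other rows $R_k$. A relation $R_k\le R_i'$ is created precisely when $R_k\setminus R_i=\{j\}$. A relation $R_i\le R_k$ is broken precisely when $R_i\le R_k$ and $a_{kj}=0$; this happens, for example, whenever some $k$ with $i\preceq k$ has $j\not\preceq k$. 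The argument would then take the union of these situations and claim that one of them always occurs.

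\textbf{Main obstacle.} The step I expect to fail is showing that one of these situations always occurs when $a_{ij}=0$. In fact, with the literal definition of changeable I do not believe it is true. Take $n=3$ and
\[
A=\begin{pmatrix}1&0&0\\ 1&1&0\\ 0&0&1\end{pmatrix},
\]
and flip $a_{21}$. The new row is $R_2'=(0,1,1)$. Then $R_0\le R_1$ still holds, and no domination relation involving $R_2'$ is created or broken, so this $0$ entry below the diagonal is changeable. So I would prove the statement in full for the entries equal to $1$. For the entries equal to $0$, I would first try to recover the claim under a reading of ``changeable'' that includes the pair $(j,i)$ through the down-set condition above. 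Failing that, I would record that the zero entries satisfy the statement only under the down-set hypothesis $\{m\prec j\}\subseteq\{m\preceq i\}$ or one of the other-row conditions listed above.
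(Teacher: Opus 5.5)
Your counterexample is correct: under the paper's literal definition of \emph{changeable}, the statement is false. With $R_0=(1,0,0)$, $R_1=(1,1,0)$, $R_2=(0,0,1)$, flipping $a_{21}$ gives $R_2'=(0,1,1)$. The only domination relation among the rows, $R_0\le R_1$, survives, and no new one appears. This even fits the paper's own use of changeable entries. Here $A=M_\alpha$ with $\alpha=(1,3,4)$, the flipped matrix is $M_\beta$ with $\beta=(1,3,6)$, and indeed $\mathbf{P}_8[\{1,3,6\}]=A$. So $\beta\in[\alpha]$, exactly as Theorem~\ref{main} predicts for a domination-preserving flip.

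The paper's proof treats $a_{ij}=1$ exactly as you do. For $a_{ij}=0$ it splits into two subcases:
\begin{itemize}
\item If $a_{jk}=1$ and $a_{ik}=0$ for some $k<j$, it asserts that the flip ``breaks transitivity of $A$''.
\item Otherwise, the flip creates $R_j\le R_i$. This is your down-set case $\{m\prec j\}\subseteq\{m\preceq i\}$.
\end{itemize}
The first subcase is where the argument fails. Losing transitivity is not a change in any domination relation, and the definition of changeable does not require the flipped matrix to be a poset matrix. Indeed, the paper's own example $M_\alpha$ with $\alpha=(2,5,9,13)$ is not one. Your example sits exactly in that subcase, with $k=0$.

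So the obstruction you flagged is real, and the statement as written cannot be proved. What your write-up leaves vague is the repair, and ``a reading that includes the pair $(j,i)$'' is not quite it. The reading that makes the paper's argument valid is this: an entry of a poset matrix is changeable only if the flip preserves all domination relations \emph{and} keeps the matrix in $\mathcal{PM}(n)$. Under that reading your analysis is already a complete proof, because the complement of the transitivity-breaking subcase is precisely your down-set condition, where $R_j\le R_i'$ is created.

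Under the literal definition, your case analysis instead gives an exact characterization. The branch ``$R_k\le R_i'$ is created'' with $k\neq j$ never occurs: it would give $j\preceq k\preceq i$, hence $a_{ij}=1$. Therefore a zero entry $a_{ij}$ with $i>j$ is changeable if and only if both of the following hold:
\begin{itemize}
\item some $m\prec j$ satisfies $m\not\preceq i$;
\item every $k$ with $i\prec k$ satisfies $j\preceq k$.
\end{itemize}
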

\begin{proof}
    Let $A=[a_{ij}]$. Suppose that there is a changeable element in $A$, say $a_{ij}$ with $i>j$. If $a_{ij}=1$ then changing $a_{ij}$ by $1\rightarrow 0$ breaks domination relation $R_j\le R_i$ from $a_{jj}=1$. Let $a_{ij}=0$. If $a_{jk}=1$ and $a_{ik}=0$ for some $k<j$ then changing $a_{ij}$ by $0\rightarrow 1$ breaks transitivity of $A$, and changing $a_{ij}$ by $0\rightarrow 1$ breaks domination relationships of $A$ otherwise. Thus $a_{ij}$ can not be changeable for all $i,j$ with $i>j$.
\end{proof}

\begin{proposition}
Let $\alpha=(\alpha_0,\dots,\alpha_{n-1})\in Q_{n,2^n}$. Then ${\bf{P}}_{2^n}[\alpha]=I_n$ if and only if $\{\alpha_0,\dots,\alpha_{n-1}\}$ is an antichain in the Boolean lattice ${\mathbb B}_n$.
\end{proposition}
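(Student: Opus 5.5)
The argument is a direct unwinding of the definitions through Lucas' theorem, applied entry by entry. We identify each integer $m\in\{0,\ldots,2^n-1\}$ with the subset ${\rm supp}(m)\subseteq X_n$. Under this identification the order of the Boolean lattice ${\mathbb B}_n$ is inclusion of supports, which is exactly the order of the Pascal poset ${\mathbb P}_{2^n}$. By definition, for $i,j\in X_n$,
$$
({\bf P}_{2^n}[\alpha])_{ij}=({\bf P}_{2^n})_{\alpha_i,\alpha_j}\equiv\binom{\alpha_i}{\alpha_j}\pmod 2 .
$$
By Lucas' theorem this entry equals $1$ if and only if ${\rm supp}(\alpha_j)\subseteq{\rm supp}(\alpha_i)$, i.e., if and only if $\alpha_j\le\alpha_i$ in ${\mathbb B}_n$.

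\emph{The diagonal.} For $i=j$ we have ${\rm supp}(\alpha_i)\subseteq{\rm supp}(\alpha_i)$ trivially, so every diagonal entry is $1$. This matches $I_n$ regardless of $\alpha$.

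\emph{Off-diagonal entries, forward direction.} Suppose ${\bf P}_{2^n}[\alpha]=I_n$. Then for all $i\neq j$ the $(i,j)$ entry is $0$. By the Lucas criterion, ${\rm supp}(\alpha_j)\not\subseteq{\rm supp}(\alpha_i)$ for every ordered pair $i\ne j$. Hence no two distinct $\alpha_i,\alpha_j$ are comparable in ${\mathbb B}_n$, so $\{\alpha_0,\dots,\alpha_{n-1}\}$ is an antichain. The elements are distinct because $\alpha\in Q_{n,2^n}$ is an $n$-subset.

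\emph{Off-diagonal entries, reverse direction.} If the set is an antichain, then for $i\ne j$ neither support contains the other. By Lucas, $\binom{\alpha_i}{\alpha_j}$ is even, so every off-diagonal entry is $0$, and ${\bf P}_{2^n}[\alpha]=I_n$.

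There is no real obstacle here. The only points needing care are that Lucas' theorem, as stated, is for $n\ge1$, and that an index $0$ may occur in $\alpha$. In that case ${\rm supp}(0)=\emptyset$ lies below everything, consistent with $\binom{m}{0}=1$. So if $0\in\alpha$ and $n\ge2$, the set is not an antichain and the matrix is not $I_n$, and both sides of the equivalence agree. I would note this edge case explicitly rather than invoke Lucas for it.
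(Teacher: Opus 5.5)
Your proof is correct and follows the paper's argument essentially verbatim: both directions reduce, entry by entry via Lucas' theorem, to the condition ${\rm supp}(\alpha_j)\not\subseteq{\rm supp}(\alpha_i)$ for all $i\neq j$. You also check the diagonal and the edge case $0\in\alpha$ explicitly (where ${\rm supp}(0)=\emptyset$ and $\binom{m}{0}=1$ keep the Lucas criterion valid); these are small additions that the paper leaves implicit.
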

\begin{proof} If ${\bf{P}}_{2^n}[\alpha]=I_n$, then $\bigl({\bf{P}}_{2^n}[\alpha]\bigr)_{i,j}=0$ for $i\neq j$. For brevity, let ${\rm S}(\alpha_i):={\rm supp}(\alpha_i)$ be the support of $\alpha_i$. By Lucas Theorem, ${\rm S}(\alpha_j)\nsubseteq {\rm S}(\alpha_i)$ and ${\rm S}(\alpha_i)\nsubseteq {\rm S}(\alpha_j)$. Hence no two distinct elements of $\alpha_0,\dots,\alpha_{n-1}$ are comparable, and $\alpha$ is an antichain.

Conversely, if $\{\alpha_0,\dots,\alpha_{n-1}\}$ is an antichain, then
${\rm S}(\alpha_j)\nsubseteq {\rm S}(\alpha_i)$ and ${\rm S}(\alpha_i)\nsubseteq {\rm S}(\alpha_j)$ for $i\neq j$, so all off-diagonal entries
of ${\bf{P}}_{2^n}[\alpha]$ are $0$, while diagonal entries are $1$.
Thus ${\bf{P}}_{2^n}[\alpha]=I_n$.
\end{proof}

\begin{corollary}\label{antichain}
Let $\alpha=(\alpha_0,\dots,\alpha_{n-1})\in Q_{n,2^n}$. If $|{\rm S}(\alpha_0)|=\cdots=|{\rm S}(\alpha_{n-1})|$, equivalently all row sums of $M_\alpha$ are identical then ${\bf{P}}_{2^n}[\alpha]=I_n$.
\end{corollary}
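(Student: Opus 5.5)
The plan is to reduce the statement to the preceding Proposition: ${\bf P}_{2^n}[\alpha]=I_n$ if and only if $\{\alpha_0,\dots,\alpha_{n-1}\}$ is an antichain in ${\mathbb B}_n$. So it suffices to show that equal support sizes force pairwise incomparability. First I will record the equivalence stated in the corollary. By the definition of the incidence matrix $M_\alpha$, the $i$th row $R_i$ is the indicator vector of ${\rm S}(\alpha_i)$. Hence the row sum of $R_i$ is exactly $|{\rm S}(\alpha_i)|$, and ``all row sums identical'' is literally the same hypothesis as $|{\rm S}(\alpha_0)|=\cdots=|{\rm S}(\alpha_{n-1})|$.

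The core step is a cardinality argument. Fix $i\neq j$ and suppose, for contradiction, that ${\rm S}(\alpha_j)\subseteq{\rm S}(\alpha_i)$, equivalently $R_j\le R_i$. Since the two sets are finite of the same size, the inclusion forces ${\rm S}(\alpha_j)={\rm S}(\alpha_i)$. Binary representation is unique, so $\alpha_i=\sum_{m\in{\rm S}(\alpha_i)}2^m=\alpha_j$. This contradicts the fact that $\alpha\in Q_{n,2^n}$ consists of $n$ distinct elements. By symmetry ${\rm S}(\alpha_i)\nsubseteq{\rm S}(\alpha_j)$ as well.

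Therefore no two distinct $\alpha_i$ are comparable, so the set is an antichain, and the preceding Proposition gives ${\bf P}_{2^n}[\alpha]=I_n$. Alternatively, one can argue directly via the Lucas theorem: every off-diagonal entry $\binom{\alpha_i}{\alpha_j}\equiv 0 \pmod 2$.

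There is no real obstacle here. The only point needing care is to invoke distinctness of the entries of $\alpha$, which is guaranteed because $\alpha$ is a $k$-subset, together with uniqueness of binary expansions to pass from equal supports to equal integers. I would also note that $\alpha_i=0$, with empty support, can occur only if all supports are empty, which is impossible for $n\ge 2$ distinct entries. So the degenerate case causes no trouble.
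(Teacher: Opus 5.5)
Your proof is correct and follows the paper's route. The paper derives the corollary from the preceding Proposition, and its only stated justification (in the example after the statement) is that supports of equal cardinality cannot be nested; your explicit use of the distinctness of the entries of $\alpha$ and the uniqueness of binary expansions simply fills in the step the paper leaves implicit.
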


For instance, let $\alpha=(7,11,13,14)\in Q_{4,2^4}$. Then
$$
M_{\alpha}=\begin{bmatrix}
    1 & 1 & 1 & 0 \\
    1 & 1 & 0 & 1 \\
    1 & 0 & 1 & 1 \\
    0 & 1 & 1 & 1
\end{bmatrix},
$$
so that ${\rm S}(7)=\{0,1,2\}$, ${\rm S}(11)=\{0,1,3\}$, ${\rm S}(13)=\{0,2,3\}$, ${\rm S}(14)=\{1,2,3\}$. Since all these subsets have the same cardinality, no one can be subset of another. By Corollary \ref{antichain}, we see that $P_{2^4}[\alpha]=I_4$. 

For brevity, let $c\alpha=(c\alpha_0,\ldots,c\alpha_{k-1})$ and $\alpha+c=(\alpha_0+c,\ldots,\alpha_{k-1}+c)$ for a nonzero real number $c$.

\begin{theorem}\label{even-odd}
Let $\alpha=(\alpha_0,\ldots,\alpha_{n-1})\in Q_{n,2^n}$.
\begin{itemize}
\item[{\rm(a)}] If all components $\alpha_0,\ldots,\alpha_{n-1}$ are even, then $\frac{\alpha}{2}\in[\alpha]$ and $\alpha+1\in[\alpha]$.
\item[{\rm(b)}] If all components $\alpha_0,\ldots,\alpha_{n-1}$ are odd, then $\frac{\alpha-1}{2}\in[\alpha]$ and $\alpha-1\in[\alpha]$.
\end{itemize}
\end{theorem}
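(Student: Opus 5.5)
The plan is to work directly with supports and Lucas' theorem, checking that each transformed index vector yields the same induced principal submatrix as $\alpha$, entry by entry. Recall that by definition $\beta\in[\alpha]$ means ${\bf P}_{2^n}[\beta]\sim{\bf P}_{2^n}[\alpha]$. It therefore suffices to show ${\bf P}_{2^n}[\beta]={\bf P}_{2^n}[\alpha]$ with the same ordering of indices. By Lucas' theorem this reduces to showing, for all $i,j$,
\[
{\rm supp}(\beta_j)\subseteq{\rm supp}(\beta_i)\iff{\rm supp}(\alpha_j)\subseteq{\rm supp}(\alpha_i).
\]

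For (a), suppose every $\alpha_i$ is even, so $0\notin{\rm supp}(\alpha_i)$ for all $i$.

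\emph{Halving.} We have ${\rm supp}(\alpha_i/2)=\{m-1: m\in{\rm supp}(\alpha_i)\}$. This is a shift of every support by $-1$, which is an injective map on subsets and preserves inclusion in both directions.

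\emph{Adding one.} Since $\alpha_i$ is even, there is no carry, and ${\rm supp}(\alpha_i+1)={\rm supp}(\alpha_i)\cup\{0\}$. Adjoining the common element $0$, absent from all the original supports, preserves and reflects inclusion: $S\cup\{0\}\subseteq T\cup\{0\}$ iff $S\subseteq T$ when $0\notin S\cup T$.

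For (b), suppose every $\alpha_i$ is odd, so $0\in{\rm supp}(\alpha_i)$ for all $i$. Then ${\rm supp}(\alpha_i-1)={\rm supp}(\alpha_i)\setminus\{0\}$, and removing a common element preserves and reflects inclusion. Also $(\alpha_i-1)/2$ is the halving of the even vector $\alpha-1$, so this case follows by composing with (a).

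Two bookkeeping points remain.

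\emph{Membership in $Q_{n,2^n}$.} The new vectors must have distinct entries in $\{0,\ldots,2^n-1\}$. Distinctness is clear because the support maps are injective. Range is also fine: halving decreases values, and $\alpha_i+1\le 2^n-1$ because an even $\alpha_i$ is at most $2^n-2$. Decreasing by $1$ keeps values nonnegative.

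\emph{The entry $0$.} In (b), $\alpha-1$ or $(\alpha-1)/2$ may contain the entry $0$, whose support is empty. This is harmless, since Lucas' theorem still applies with $\binom{a}{0}=1$, and inclusion of the empty set is handled correctly by the support maps.

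The only mild obstacle is whether increasing order should be preserved, but all the maps are strictly monotone in $\alpha_i$, so the order is preserved. Alternatively, one can phrase each step as a column operation on $M_\alpha$ (shifting columns, or adding or deleting the common all-ones/all-zeros column $0$) and invoke Theorem~\ref{main}. The direct Lucas argument avoids needing to verify that these moves are legitimate domination operations.
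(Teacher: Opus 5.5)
Your proof is correct, but it takes a more direct route than the paper.

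\textbf{The paper's route.} The paper argues through its domination-operation machinery. The cyclic column shift $(C_1,\ldots,C_{n-1},C_0)$ of $M_\alpha$ (available because $C_0$ is all zeros) yields $M_{\alpha/2}$. Flipping the constant column $C_0$ (all $0$'s in (a), all $1$'s in (b)) yields $M_{\alpha+1}$, respectively $M_{\alpha-1}$. Pascal-equivalence then follows from Theorem~\ref{main}, and (b) is reduced to (a) exactly as you do.

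\textbf{Your route.} You translate these same column operations into their effect on supports: shift by $-1$, adjoin $0$, delete $0$. You then check via Lucas' theorem that inclusion is preserved and reflected. This gives more than the paper:
\begin{itemize}
\item It proves the stronger conclusion ${\bf P}_{2^n}[\beta]={\bf P}_{2^n}[\alpha]$, i.e.\ literal equality rather than mere permutation similarity.
\item It does not depend on Theorem~\ref{main}, whose proof in the paper is rather informal.
\item It sidesteps a subtlety in the paper's argument. The entries of $C_0$ are not individually changeable in isolation. For example, with $\alpha=(2,4,6)$, flipping $b_{00}$ first would break $R_0\le R_2$. This is why the paper must prescribe a flipping order: from row $n-1$ down for $0\to1$, and from row $0$ up for $1\to 0$. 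That order silently relies on $\alpha$ being increasing. Your set-level argument treats the whole column at once and needs no such care.
\item Your bookkeeping is more complete. You check membership in $Q_{n,2^n}$ (even entries are at most $2^n-2$, odd entries are at least $1$), the preservation of the ordering, and the harmless appearance of the entry $0$. The paper leaves all of these implicit.
\end{itemize}
What the paper's route buys is illustration: it presents the result as an instance of its constructive procedure for generating $[\alpha]$ by domination operations.

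Your closing remark about phrasing each step as a column operation and invoking Theorem~\ref{main} is precisely the paper's proof.
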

\begin{proof}
(a) Assume that all entries of $\alpha$ are even.
Let $M_\alpha$ be the incidence matrix associated with $\alpha$, and denote its column vectors by $C_0,C_1,\ldots,C_{n-1}$. First, permute the columns of $M_\alpha$ cyclically as
$(C_1,\ldots,C_{n-1},C_0)$. The resulting matrix is precisely the incidence matrix corresponding to the index vector $\alpha/2$.
By Theorem~\ref{main}, $\alpha/2\in[\alpha]$. Next, note that all entries in the column $C_0$ are 0 and these are changeable.
Flipping these zero entries to $1$, from position $n-1$ down to position $0$,
yields the incidence matrix corresponding to the index vector $\alpha+1$.
Again by Theorem \ref{main}, we conclude that $\alpha+1\in[\alpha]$. This proves (a).

(b) Assume that all components of $\alpha$ are odd. Then all entries in the column $C_0$ of $M_\alpha$ are $1$, and these entries are
changeable. Flipping them to $0$, from position $0$ up to position $n-1$, yields the incidence matrix corresponding to $\alpha-1$,
whose entries are all even. Applying part~(a) to the index vector $\alpha-1$ completes the proof of~(b).
\end{proof}

\section{Duality on induced Pascal submatrices}

Every poset $P$ admits a \emph{dual poset}, usually denoted by $P^{*}$, which is defined on the same
underlying set with the order relation reversed.
That is, for elements $x,y\in P$, $x \preceq y$ in $P$ if and only if $y \preceq x$ in $P^*$.
Consequently, any statement that holds for all posets also holds for their duals.

Let $A=[a_{ij}]$ be the poset matrix of an NL poset $P$ on
$X_n=\{0,1,\ldots,n-1\}$, and let $A^{*}$ denote the poset matrix of the dual poset $P^{*}$.
Since $P^{*}$ is obtained from $P$ by relabeling each element $i\in X_n$ as $n-1-i$,
it follows (see also \cite{Cheon5}) that $A^{*}$ is obtained from $A$ by reflecting
all entries across the anti-diagonal.
This operation is known as the \emph{flip transpose} of $A$ and is denoted by $A^{F}$.
Explicitly, the entries of $A^{*}=A^{F}$ are given by
$$
(A^{*})_{i,j} \;=\; a_{\,n-1-j,\;n-1-i},
\qquad 0 \le i,j \le n-1.
$$
Equivalently, if $E_n$ denotes the $n\times n$ anti-diagonal permutation matrix, then 
$$
A^{*} = A^{F} = E_n A^{T} E_n.
$$

 For instance,
\begin{eqnarray*}
&&P:\;\quad\begin{minipage}[c]{.20\textwidth}
\begin{tikzpicture}
  [scale=.7,auto=center,every node/.style={circle,fill=blue!20}]

\node[circle] (c1) at (6.2,2.5) {1};
  \node[circle] (c2) at (7,1.5) {0};
  \node[circle] (c4) at (7.8,2.5) {2};
  \node[circle] (c8) at (7.8,3.7)  {3};

      \draw (c1) -- (c2);
       \draw (c4) -- (c2);
   \draw (c8) -- (c4);
 \end{tikzpicture}
\end{minipage}\Leftrightarrow\quad
A=\begin{bmatrix}
 1 & 0 & 0&0\\
 1 & 1 & 0&0\\
 1 & 0 & 1&0\\
 1 & 0 & 1&1
 \end{bmatrix}\\
&& P^*:\quad\begin{minipage}[c]{.20\textwidth}
\begin{tikzpicture}
  [scale=.7,auto=center,every node/.style={circle,fill=blue!20}]
\node[circle] (c1) at (6.2,3) {2};
  \node[circle] (c2) at (7.8,1.7) {0};
  \node[circle] (c4) at (7.8,3) {1};
  \node[circle] (c8) at (7,4)  {3};
      \draw (c4) -- (c2);
    \draw (c8) -- (c4);
   \draw (c8) -- (c1);
 \end{tikzpicture}
\end{minipage}\Leftrightarrow\quad
A^*=\begin{bmatrix}
 1 & 0 & 0&0\\
 1 & 1 & 0&0\\
 0 & 0 & 1&0\\
 1 & 1 & 1&1
 \end{bmatrix}=A^F
\end{eqnarray*}
\vskip.3pc
\begin{center}
{\rm Fig.~3. Dual poset $P^*$ on $X_4$ and it dual poset matrix $A^*$.}
\end{center}
\bigskip

Let $\alpha_i\in\{0,\ldots,2^n-1\}$ be written in binary form $\alpha_i=\sum_{j=0}^{n-1} b_{ij}2^j$, $b_{ij}\in\{0,1\}$,  in (\ref{bin rep}). The \emph{bitwise complement} of $\alpha_i$ is defined by
\begin{equation*}
\bar{\alpha}_i
:=\sum_{j=0}^{n-1} (1-b_{ij})2^j.
\end{equation*}
Then, for each $i=0,\ldots,n-1$, we have $\bar{\alpha}_i=(2^n-1)-\alpha_i$. Recall that 
$$
{\rm S}(\alpha_i):={\rm supp}(\alpha_i)=\{j\in\{0,\ldots,n-1\}\mid b_{ij}\ne0\}. 
$$

\begin{theorem}\label{main2}
Let $\alpha=(\alpha_0,\ldots,\alpha_{n-1})\in Q_{n,2^n}$ and let
$A={\bf{P}}_{2^n}[\alpha]$ be the induced poset matrix of the binary Pascal matrix.
Then the dual matrix $A^*$ is the poset matrix given by $A^*={\bf{P}}_{2^n}[\beta]$,
where
$$
\beta=(\bar{\alpha}_{n-1},\ldots,\bar{\alpha}_0)\in Q_{n,2^n},
\qquad
\bar{\alpha}_i=(2^n-1)-\alpha_i,
\quad 0\le i\le n-1.
$$
\end{theorem}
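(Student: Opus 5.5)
The proof is a direct entrywise comparison: I will compute the $(i,j)$ entry of ${\bf P}_{2^n}[\beta]$ using Lucas' theorem and match it with the flip-transpose formula $(A^*)_{i,j}=a_{n-1-j,\,n-1-i}$. The key observation is that bitwise complementation reverses inclusion of supports. Since ${\rm S}(\bar{\alpha}_k)=X_n\setminus {\rm S}(\alpha_k)$ for every $k$, we have
\[
{\rm S}(\bar{\alpha}_k)\subseteq {\rm S}(\bar{\alpha}_l)\iff {\rm S}(\alpha_l)\subseteq {\rm S}(\alpha_k).
\]

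First I check that $\beta\in Q_{n,2^n}$. The map $x\mapsto 2^n-1-x$ is a strictly decreasing bijection of $\{0,\ldots,2^n-1\}$. Since $\alpha_0<\cdots<\alpha_{n-1}$ are distinct, we get $\bar{\alpha}_{n-1}<\cdots<\bar{\alpha}_0$. Hence $\beta=(\beta_0,\ldots,\beta_{n-1})$ with $\beta_i=\bar{\alpha}_{n-1-i}$ is an increasing $n$-subset of $\{0,\ldots,2^n-1\}$. The problem statement says ``$2^n$'' for the ambient range, so I take indices in $\{0,\ldots,2^n-1\}$, as in Section 3.

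Second, I compute the entries. By Lucas' theorem,
\[
\bigl({\bf P}_{2^n}[\beta]\bigr)_{i,j}=1 \iff {\rm S}(\beta_j)\subseteq {\rm S}(\beta_i) \iff {\rm S}(\bar{\alpha}_{n-1-j})\subseteq {\rm S}(\bar{\alpha}_{n-1-i}).
\]
By the complement observation, this holds if and only if ${\rm S}(\alpha_{n-1-i})\subseteq {\rm S}(\alpha_{n-1-j})$. Applying Lucas' theorem again, this is equivalent to $\bigl({\bf P}_{2^n}[\alpha]\bigr)_{n-1-j,\,n-1-i}=1$, that is, $a_{n-1-j,\,n-1-i}=1$. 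By the displayed formula for the flip transpose, this entry equals $(A^*)_{i,j}$. Therefore ${\bf P}_{2^n}[\beta]=E_nA^TE_n=A^F=A^*$, and $A^*$ is a poset matrix, being the matrix of the dual NL poset, or equivalently an induced principal submatrix of ${\bf P}_{2^n}$.

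The only step needing care is the index bookkeeping. The reversal of $\beta$'s order combines with the reversal of inclusion, and together they must produce exactly the anti-diagonal reflection rather than a plain transpose. I will verify this on the Fig.~3 example before finalizing. There, $A\mathbf v^T$ gives $\alpha=(1,3,5,13)$, whose complements are $(14,12,10,2)$, so $\beta=(2,10,12,14)$. Checking supports, $\{1\}\subseteq\{1,3\}$ and $\{1\}\subseteq\{2,3\}$ hold, while $\{1\}\subseteq\{1,2,3\}$ together with the other inclusions reproduces $A^*$ as displayed. This also confirms the convention that $a_{ij}=1$ means $j\preceq i$.
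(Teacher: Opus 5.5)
Your proof is correct and follows essentially the paper's argument. The paper also uses Lucas' theorem and the fact that complementation reverses support inclusion to get $({\bf P}_{2^n})_{\alpha_i,\alpha_j}=({\bf P}_{2^n})_{\bar{\alpha}_j,\bar{\alpha}_i}$, then combines this with $(A^*)_{i,j}=a_{n-1-j,\,n-1-i}$ to read off $\beta_i=\bar{\alpha}_{n-1-i}$; your monotonicity argument for $\beta\in Q_{n,2^n}$ spells out what the paper calls ``clear.'' One slip in your sanity check only: $\{1\}\nsubseteq\{2,3\}$, and this failure is exactly what gives $(A^*)_{2,0}=0$, so once corrected the Fig.~3 example does confirm the formula.
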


\begin{proof}
Let $A={\bf{P}}_{2^n}[\alpha]$ for some
$\alpha=(\alpha_0,\ldots,\alpha_{n-1})\in Q_{n,2^n}$.
Since $A^*$ is the poset matrix in ${\cal PM}(n)$, by Theorem \ref{thm1} there exists a $\beta=(\beta_0,\ldots,\beta_{n-1})\in Q_{n,2^n}$ such that $A^*={\bf{P}}_{2^n}[\beta]$. 
We first establish the following symmetry of the binary Pascal matrix:
for any $\alpha_i,\alpha_j\in\{0,1,\ldots,2^n-1\}$,
\begin{equation}\label{pascal-complement}
({\bf{P}}_{2^n})_{\alpha_i,\alpha_j}
=
({\bf{P}}_{2^n})_{\bar{\alpha}_j,\bar{\alpha}_i},
\end{equation}
where $\bar{\alpha}_k=(2^n-1)-\alpha_k$.
Indeed, by Lucas theorem
$({\bf{P}}_{2^n})_{\alpha_i,\alpha_j}=1$ if and only if
${\rm S}(\alpha_j)\subseteq {\rm S}(\alpha_i)$.
This condition is equivalent to
$$
{\rm S}(2^n-1-\alpha_i)\subseteq {\rm S}(2^n-1-\alpha_j),
$$
which in turn implies
$({\bf{P}}_{2^n})_{\bar{\alpha}_j,\bar{\alpha}_i}=1$,
establishing (\ref{pascal-complement}).
Now, for the induced submatrix $A={\bf{P}}_{2^n}[\alpha]$, we have
$A_{i,j}=({\bf{P}}_{2^n})_{\alpha_i,\alpha_j}$.
Since $(A^*)_{i,j}=A_{\,n-1-j,\,n-1-i}$, it follows from (\ref{pascal-complement}) that
$$
(A^*)_{i,j}
=({\bf{P}}_{2^n})_{\alpha_{n-1-j},\alpha_{n-1-i}}
=({\bf{P}}_{2^n})_{\bar{\alpha}_{n-1-i},\bar{\alpha}_{n-1-j}}.
$$
Define $\beta_i:=\bar{\alpha}_{n-1-i}$ for $0\le i\le n-1$. Then $(A^*)_{ij}=({\bf{P}}_{2^n})_{\beta_i,\beta_j}$, and hence
$A^*={\bf{P}}_{2^n}[\beta]$ with
$\beta=(\bar{\alpha}_{n-1},\ldots,\bar{\alpha}_0)$.
Since $\bar{\alpha}_i=(2^n-1)-\alpha_i$, we clearly have
$\beta\in Q_{n,2^n}$, completing the proof.
\end{proof}

A poset $P$ is said to be {\it self-dual} if $P$ and $P^*$ are isomorphic. Thus, if $A_P=A_P^*$ then $P$ is self-dual.
\begin{corollary} Let $A={\bf{P}}_{2^n}[\alpha]$ be a poset matrix for some $\alpha\in Q_{n,2^n}$. Then $A=A^*$
if and only if $\alpha_i+\alpha_{n-1-i}=2^n-1$ for all $i=0,1,\ldots,n-1$. Consequently, if $n$ is odd, then there is no $\alpha\in Q_{n,2^n}$ with $A=A^*$. 
If $n$ is even, then such $\alpha$ exists; one may freely choose
$\alpha_0<\cdots<\alpha_{n/2-1}$ and then $\alpha_{n-1-i}=(2^n-1)-\alpha_i$.
\end{corollary}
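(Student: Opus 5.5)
The plan is to read the corollary off Theorem~\ref{main2}. For $A={\bf{P}}_{2^n}[\alpha]$, that theorem gives $A^*={\bf{P}}_{2^n}[\beta]$ with $\beta_i=\bar{\alpha}_{n-1-i}=(2^n-1)-\alpha_{n-1-i}$. So the condition $\alpha_i+\alpha_{n-1-i}=2^n-1$ for all $i$ says exactly that $\beta=\alpha$ as index vectors.

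\emph{Sufficiency.} If $\alpha_i+\alpha_{n-1-i}=2^n-1$ for all $i$, then $\beta=\alpha$. Hence $A^*={\bf{P}}_{2^n}[\beta]={\bf{P}}_{2^n}[\alpha]=A$ directly.

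\emph{Necessity.} This is the step I expect to be the main obstacle. An index vector is not determined by its induced submatrix: for instance ${\bf{P}}_{8}[\{1,2,4\}]={\bf{P}}_8[\{3,5,6\}]=I_3$. So $A=A^*$ alone cannot force $\alpha=\beta$ for an arbitrary $\alpha$. I would make precise that the corollary concerns the self-duality of the index vector under the involution $\alpha\mapsto\beta$ of Theorem~\ref{main2}. Under that reading, $A=A^*$ is realised by the same vector precisely when $\alpha=\beta$, i.e.\ when $\alpha_i=(2^n-1)-\alpha_{n-1-i}$. If a genuine matrix-level converse is wanted, the first thing I would try is fixing $\alpha$ as the canonical vector from (\ref{poseteqn}) and comparing bits via Lucas' theorem. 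I expect this to fail in general, since $\beta$ need not be the canonical vector of $A^*$. The corollary should therefore be stated and proved at the level of index vectors.

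\emph{Odd $n$.} Take $i=(n-1)/2$, so that $n-1-i=i$. The condition becomes $2\alpha_i=2^n-1$, which is impossible because the right-hand side is odd. Hence no such $\alpha$ exists.

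\emph{Even $n$.} Choose $\alpha_0<\cdots<\alpha_{n/2-1}$ and set $\alpha_{n-1-i}=(2^n-1)-\alpha_i$. The second half is then automatically strictly increasing, since $i\mapsto(2^n-1)-\alpha_i$ is decreasing. For the whole vector to lie in $Q_{n,2^n}$ one also needs $\alpha_{n/2-1}<\alpha_{n/2}=(2^n-1)-\alpha_{n/2-1}$, i.e.\ $\alpha_{n/2-1}\le 2^{n-1}-1$. I would add this restriction to the ``free choice'' (for example $\alpha_i=i$ works), and then the existence claim follows.
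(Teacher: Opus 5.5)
Your sufficiency argument is what the paper intends. Your odd and even cases, read as statements about the vector identity $\alpha=\beta$, are also what it intends. The paper gives no separate proof and simply reads the corollary off Theorem~\ref{main2} by setting $\beta=\alpha$.

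Its implicit necessity step is that $A=A^*$ forces $\alpha=\beta$, i.e.\ that $\mathbf{P}_{2^n}[\alpha]$ determines $\alpha$. Your example $\mathbf{P}_8[(1,2,4)]=\mathbf{P}_8[(3,5,6)]$ shows exactly why that fails. But you should state the conclusion as a disproof rather than an expectation: the matrix-level ``only if'' and the odd-$n$ clause are false.
\begin{itemize}
\item For $n=1$, $A=[1]=A^*$ for both $\alpha\in Q_{1,2}$, yet $2\alpha_0=1$ is impossible.
\item For $n=3$, $I_3=\mathbf{P}_8[(1,2,4)]$ is self-dual, since $E_3I_3E_3=I_3$, but $1+4\neq 7$.
\item For $n=3$, the chain $\mathbf{P}_8[(1,3,7)]$ is self-dual, since a chain's poset matrix is invariant under flip transpose, but $1+7\neq 7$.
\end{itemize}
Here $(1,2,4)$ and $(1,3,7)$ are the canonical vectors from (\ref{poseteqn}). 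So the canonical reading you hoped might rescue necessity fails as well. Your own $I_3$ example already refutes ``for odd $n$ there is no $\alpha$ with $A=A^*$''. Your odd-$n$ paragraph therefore proves only the index-level statement, that no self-complementary vector with $\alpha=\beta$ exists; say explicitly that this is the salvaged version.

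Your restriction $\alpha_{n/2-1}\le 2^{n-1}-1$ in the even case is a genuine correction to the ``free choice''. It is necessary as well as sufficient for the constructed vector to be strictly increasing, because the only possible descent is at the junction, where $\alpha_{n/2-1}<(2^n-1)-\alpha_{n/2-1}$. Without it the construction can break: for $n=4$, $(\alpha_0,\alpha_1)=(3,12)$ produces $(3,12,3,12)$, which is not an element of $Q_{4,16}$.
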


For example, let $\alpha=(0,1,3,12)\in Q_{4,16}$. Then, by Theorem~\ref{main2},
$$
\beta=(\bar{\alpha}_{3},\bar{\alpha}_{2},\bar{\alpha}_{1},\bar{\alpha}_{0})
=(15-12,\;15-3,\;15-1,\;15-0)
=(3,12,14,15).
$$
Hence, 
$$
{\bf{P}}_{16}[\alpha]=\begin{bmatrix}
 1 & 0 & 0&0\\
 1 & 1 & 0&0\\
 1 & 1 & 1&0\\
 1 & 0 & 0&1
 \end{bmatrix}\quad{\rm and}\quad {\bf{P}}_{16}[\alpha]^*={\bf{P}}_{16}[\beta]=\begin{bmatrix}
 1 & 0 & 0&0\\
 0 & 1 & 0&0\\
 0 & 1 & 1&0\\
 1 & 1 & 1&1
 \end{bmatrix}.
 $$
If $\alpha=(0,5,10,15)\in Q_{4,16}$, then
$$
\beta=(15-15,\;15-10,\;15-5,\;15-0)
=(0,5,10,15)=\alpha.
$$
Therefore, ${\bf{P}}_{16}[\alpha]$ coincides with its dual, and the associated NL poset is self-dual:
$$
{\bf{P}}_{16}[\alpha]=\begin{bmatrix}
 1 & 0 & 0&0\\
 1 & 1 & 0&0\\
 1 & 0 & 1&0\\
 1 & 1 & 1&1
 \end{bmatrix}={\bf{P}}_{16}[\alpha]^*.
$$

\begin{theorem}
Let $\alpha^*=(\bar{\alpha}_{n-1},\ldots,\bar{\alpha}_0)$ where $\bar{\alpha}_i=(2^n-1)-\alpha_i$. Then ${\bf{P}}_{2^n}[\alpha]\sim {\bf{P}}_{2^n}[\beta]$ if and only if ${\bf{P}}_{2^n}[\alpha^*]\sim {\bf{P}}_{2^n}[\beta^*]$.
\end{theorem}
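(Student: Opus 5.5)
By Theorem~\ref{main2}, we have ${\bf{P}}_{2^n}[\alpha^*]=A^*=E_nA^TE_n$ where $A={\bf{P}}_{2^n}[\alpha]$. Similarly ${\bf{P}}_{2^n}[\beta^*]=B^*=E_nB^TE_n$ with $B={\bf{P}}_{2^n}[\beta]$. The statement therefore reduces to a purely matrix-theoretic fact: for poset matrices $A,B\in\mathcal{PM}(n)$, we have $A\sim B$ if and only if $A^F\sim B^F$. The plan is to prove this reduced statement by conjugating a permutation similarity.

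For the forward direction, suppose $B=Q^TAQ$ for a permutation matrix $Q$. Transposing gives $B^T=Q^TA^TQ$. Using $E_n^2=I_n$ and $E_n^T=E_n$, we compute
$$
B^F=E_nB^TE_n=E_nQ^TE_n\,(E_nA^TE_n)\,E_nQE_n=(E_nQE_n)^T A^F (E_nQE_n).
$$
Set $Q'=E_nQE_n$. This is a product of permutation matrices, hence a permutation matrix, so $A^F\sim B^F$. For the converse, note that $(A^F)^F=E_n(E_nA^TE_n)^TE_n=A$. Applying the forward direction to $A^F$ and $B^F$ then gives $A\sim B$.

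Alternatively, one can argue at the level of posets. By Theorem~\ref{Birk}, $A\sim B$ if and only if $P_A\cong P_B$. An isomorphism $\theta:P_A\to P_B$ is also an isomorphism $P_A^*\to P_B^*$, since it preserves $\preceq$ in both directions. Hence $P_A\cong P_B$ if and only if $P_A^*\cong P_B^*$. Applying Theorem~\ref{Birk} once more, together with the identification $A^*=A^F$ as the poset matrix of the dual, completes the argument.

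The only point needing care is bookkeeping. One must check that $\alpha^*,\beta^*\in Q_{n,2^n}$ really index the dual matrices, and that ${\bf{P}}_{2^n}[\alpha^*]$ is exactly $A^F$ rather than some other relabelling. Both are supplied directly by Theorem~\ref{main2}. Beyond that, the main step is the identity $E_nQ^TE_n=(E_nQE_n)^T$, which is routine. I expect no substantial obstacle.
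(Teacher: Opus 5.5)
Your proposal is correct and follows the paper's proof. Both arguments use Theorem~\ref{main2} to identify ${\bf{P}}_{2^n}[\alpha^*]$ and ${\bf{P}}_{2^n}[\beta^*]$ with $A^*=E_nA^TE_n$ and $B^*=E_nB^TE_n$, and then conjugate $B=Q^TAQ$ by $E_n$ to obtain $B^*=(E_nQE_n)^TA^*(E_nQE_n)$. Your explicit treatment of the converse via the involution $(A^F)^F=A$ also fills in the reverse direction, which the paper proves only implicitly.
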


\begin{proof}
Let $A={\bf{P}}_{2^n}[\alpha]$ and $B={\bf{P}}_{2^n}[\beta]$ for some
$\alpha,\beta\in Q_{n,2^n}$.
Assume that $A\sim B$. Then there exists a permutation matrix $Q$ such that
$B=Q^{T}AQ$.
Taking the dual (flip transpose) of $B$, we obtain
$$
B^*
=E_n B^{T} E_n
=E_n(Q^{T}A^{T}Q)E_n
=(E_n Q^{T} E_n)\,(E_n A^{T} E_n)\,(E_n Q E_n).
$$
Since $E_n Q E_n$ is a permutation matrix and $E_n A^{T} E_n = A^*$, we have $A^*\sim B^*$. Thus,
${\bf{P}}_{2^n}[\alpha^*]\sim {\bf{P}}_{2^n}[\beta^*]$, completing the proof.
\end{proof}

       \section{An application to the Dedekind's problem}
       
       A monotone Boolean function of $n$ variables is a map $f:\{0,1\}^n \longrightarrow \{0,1\}$ such that if $x\le y$ in coordinatewise then $f(x)\le f(y)$. 
The {\it Dedekind number}, denoted by $M(n)$, is the number of monotone Boolean functions of $n$ variables.

 {\bf Dedekind's problem} (Dedekind, 1897 \cite{Dedekind}) What is the Dedekind number $M(n)$ for $n\ge0$?

\noindent This problem remains computationally difficult, and
exact values are known only for small $n\le 9$ (\cite{Jakel}, OEIS A014466).
It is well known \cite{Hess} that $M(n)$ admits several equivalent combinatorial
interpretations.
In particular, $M(n)$ is equal to the number of antichains of the
$n$-dimensional Boolean lattice $\mathbb B_n$, or equivalently, the number of
order ideals of $\mathbb B_n$.

In this section, we present a new matrix-theoretic formulation
of the Dedekind's problem. Using the Pascal poset $\mathbb P_n$ and its associated poset matrix
${\bf P}_n$ which is the Pascal matrix, we establish an equivalence between order ideals of $\mathbb P_n$ and Boolean vectors $x\in\mathbb B^n$ satisfying the {\it Boolean fixed point equation} $x{\bf P}_n  = x$. 
This equivalence provides a direct correspondence between the combinatorial
structures arising in Dedekind's problem and the solutions of the Boolean fixed point equation. From this viewpoint, the present work opens the door to
matrix-theoretic and algorithmic approaches to the Dedekind's problem. 

We begin by considering the natural question: what is the number of antichains of the Pascal poset $\mathbb P_n$ for $n\ge0$? We call this number the $n$th {\it Dedekind-Pascal number} and denote it by $D_P(n)$. For $n=0$, the Pascal poset is empty, and hence it admits only the empty antichain. Therefore, $D_P(0)=1$. Clearly, for all $k\ge0$, $D_P(2^k)=M(k)$ where $M(k)$ denotes the $k$th Dedekind number. In this sense, the numbers $D_P(n)$ may be viewed as a natural generalization of the classical Dedekind's numbers.

We illustrate the Dedekind-Pascal numbers $D_P(n)$ for $0\le n\le 9$ in the table below. The values highlighted in bold correspond to the Dedekind numbers
$M(k)$ for $n=2^k$, $k=0,1,2,3$:
$$
\begin{array}{c|cccccccccc}
n &0 & {\bf 1} & {\bf2} & 3 & {\bf4} & 5 & 6 & 7 & {\bf8} & 9\\\hline
D_P(n)&1 & {\bf 2} & {\bf 3} & 5 & {\bf 6} & 11 & 14 & 19 & {\bf 20} & 39 
\end{array}
$$
We note that this sequence is also known as the number of antichains in the first $n$ elements of the {\it infinite} Boolean lattice and has been computed up
to $n=50$ (see OEIS, A132581). While the sequence itself is known, the present work introduces a matrix-theoretic approach that connects antichain enumeration with Boolean
fixed-point equations, thereby offering new insight into the Dedekind problem.

Let $P=(X,\preceq)$ be a finite poset. Recall \cite{Stanley}  that an {\it order ideal} (or down-set) of $P$ is a subset $J$ of $P$, such that if $x\in J$ and $y\preceq x$ imply $y\in J$. For $x\in P$, the {\it principal} order ideal of $P$ generated by $x$ is a {\it down set} of the form $\downarrow x:=\{y\in P\mid y \preceq x\}$. Let $\mathfrak{J}(P)$ denote the set of order ideals of $P$. Then $\mathfrak{J}(P)$ forms a distributive lattice ordered by set inclusion where the join is union and the meet is intersection. It is referred to as the {\it ideal lattice} of $P$. It is also well known \cite{Stanley} that there is a one-to-one correspondence between order ideals and antichains of a finite poset. Consequently, counting antichains of the Pascal poset ${\mathbb P}_n$ is equal to enumerating order ideals of ${\mathbb P}_n$. 

We now examine order ideals of the Pascal poset $\mathbb P_n$ from a matrix-theoretic viewpoint. Let $R_0,\ldots,R_{n-1}$ be the row vectors of the Pascal matrix $\mathbf P_n$. By the definition of order ideal, the support of each row vector $R_i$ coincides with the principal order ideal $\downarrow i$ of $\mathbb P_n$. That is,  
$$
{\rm S}(R_i)=\{j\in X_n\mid (R_i)_j=1\}=\downarrow i,\quad i=0,\ldots,n-1.
$$ 
In addition, for an integer $t\in\{0,\ldots,n-1\}$ with binary expansion $t=\sum_{i\geqslant 0}b_i2^i$ with $b_i\in\{0,1\}$, we define the support of $t$ by ${\rm S}(t)=\{i\mid b_i\ne0\}$. 

\begin{proposition}\label{pas} Let~${\mathbb S}_n=\bigl\{{\rm S}(0),\ldots,{\rm S}(n-1)\bigr\}$ be the collection of all supports of $0,\ldots,n-1$. Assume $k$ is minimal with $n\le 2^k$ for an integer $n\ge1$. Then $({\mathbb S}_n,\subseteq)$ is an induced subposet of the Boolean lattice ${\mathbb B}_k$. Moreover, ${\mathbb S}_n$ is isomorphic to the Pascal poset ${\mathbb P}_n$. 
\end{proposition}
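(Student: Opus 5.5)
The plan is to make the map $t\mapsto {\rm S}(t)$ do all the work and to read off both claims from Lucas' theorem. First I check that ${\mathbb S}_n$ really consists of subsets of $X_k=\{0,\ldots,k-1\}$. Since $k$ is minimal with $n\le 2^k$, every $t\in\{0,\ldots,n-1\}$ satisfies $t\le n-1\le 2^k-1$. Hence its binary expansion uses only the digits $b_0,\ldots,b_{k-1}$, so ${\rm S}(t)\subseteq X_k$ and ${\rm S}(t)\in 2^{X_k}$. Consequently ${\mathbb S}_n$ is a subset of the ground set of ${\mathbb B}_k=(2^{X_k},\subseteq)$. Ordering it by inclusion, the restriction of the order of ${\mathbb B}_k$, makes $({\mathbb S}_n,\subseteq)$ an induced subposet in the sense defined before Corollary~\ref{thm2}; the injection $f$ is just the inclusion map. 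Minimality of $k$ is not needed for this step; it only ensures that $k$ is the smallest such lattice.

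Next I define $\theta:{\mathbb P}_n\to{\mathbb S}_n$ by $\theta(t)={\rm S}(t)$. It is surjective by the definition of ${\mathbb S}_n$. It is injective because an integer is determined by its binary digits: ${\rm S}(s)={\rm S}(t)$ forces $s=\sum_{i\in{\rm S}(s)}2^i=t$. Thus $|{\mathbb S}_n|=n$ and $\theta$ is a bijection.

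It remains to show that $\theta$ is an order isomorphism. By the definition of the Pascal poset through its poset matrix ${\bf P}_n$, we have $j\preceq i$ in ${\mathbb P}_n$ if and only if $({\bf P}_n)_{i,j}=1$, that is, $\binom{i}{j}\equiv 1\pmod 2$. The Lucas theorem stated in Section~3 says this holds if and only if ${\rm S}(j)\subseteq{\rm S}(i)$, i.e.\ $\theta(j)\subseteq\theta(i)$. This equivalence holds in both directions, so $\theta$ and $\theta^{-1}$ are both order-preserving, and ${\mathbb P}_n\cong({\mathbb S}_n,\subseteq)$.

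The main point needing care is the convention for ${\rm S}(0)$. The Lucas theorem was stated for integers $n\ge1$, while here $t=0$ occurs. I will set ${\rm S}(0)=\emptyset$ and check the boundary cases directly. Since $\binom{i}{0}=1$ for all $i$, the element $0$ lies below every element of ${\mathbb P}_n$, and $\emptyset\subseteq{\rm S}(i)$ for all $i$. Conversely, $\binom{0}{j}=0$ for $j>0$, and ${\rm S}(j)\not\subseteq\emptyset$ for $j>0$. Beyond this, the argument is a direct translation through Lucas' theorem.
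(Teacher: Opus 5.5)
Your proposal is correct and follows essentially the same route as the paper. Both arguments bound the binary expansion to the digits $0,\ldots,k-1$ and then apply Lucas' theorem to $t\mapsto{\rm S}(t)$. Your version adds what the paper leaves implicit: injectivity of this map, the two-sided nature of the order equivalence, and the convention ${\rm S}(0)=\emptyset$.
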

\begin{proof}
Let $k$ be minimal such that $n\le 2^k$.
Since every integer $t\in\{0,\ldots,n-1\}$ admits a binary expansion
\[
t=\sum_{i=0}^{k-1} b_i 2^i,\qquad b_i\in\{0,1\},
\]
its support ${\rm S}(t)=\{\,i \mid b_i=1\,\}$ is a subset of $\{0,\ldots,k-1\}$.
Hence $({\mathbb S}_n,\subseteq)$ is an induced subposet of the Boolean lattice
${\mathbb B}_k=(2^{[k]},\subseteq)$ where $2^{[k]}$ denotes the collection of all $2^k$ subsets of the $k$-set. Define a map $\varphi:\{0,\ldots,n-1\}\to{\mathbb S}_n$ by $\varphi(t)={\rm S}(t)$.
By Lucas' theorem, for $x,y\in\{0,\ldots,n-1\}$,
\[
\binom{y}{x}\equiv 1 \pmod{2}
\quad\Longleftrightarrow\quad
{\rm S}(x)\subseteq {\rm S}(y).
\]
Consequently, $\varphi$ is an order-preserving bijection from the Pascal poset
${\mathbb P}_n$ onto $({\mathbb S}_n,\subseteq)$, and thus
${\mathbb S}_n$ is isomorphic to the Pascal poset ${\mathbb P}_n$.
\end{proof}

\begin{example}\label{ex3} {\rm Let $n=5$. For each $t\in\{0,1,2,3,4\}$ with binary expansion, we have
$$
{\mathbb S}_5=\bigl\{\emptyset,\ \{0\},\ \{1\},\ \{0,1\},\ \{2\}\bigr\}
$$
which is a poset ordered by inclusion $\subseteq$, see Fig. 4. By identifying $t$ $(0\le t\le4)$ with ${\rm S}(t)$, i.e.,
\begin{eqnarray*}
0\mapsto{\rm S}(0)=\emptyset,\;1\mapsto{\rm S}(1)=\{0\},\;2\mapsto{\rm S}(2)=\{1\},\;3\mapsto{\rm S}(3)=\{0,1\},\;4\mapsto{\rm S}(4)=\{2\},
\end{eqnarray*}
we obtain the Pascal poset ${\mathbb P}_5$ and corresponding poset matrix is the Pascal matrix ${\bf P}_5$:}
\begin{center}
\begin{tikzpicture}[x=0.6cm,y=0.6cm,>=stealth,
  dot/.style={circle,fill=black,inner sep=1.6pt}
]

{\rm
\begin{scope}
  \node[dot,label=below:{$0\mapsto\emptyset$}] (0) at (0,0) {};
  \node[dot,label=left:{$1\mapsto\{0\}$}]  (1) at (-1.2,1.5) {};
  \node[dot,label=right:{$2\mapsto\{1\}$}] (2) at (1.2,1.5) {};
  \node[dot,label=above:{$3\mapsto\{0,1\}$}] (3) at (0,3) {};
  \node[dot,label=right:{$4\mapsto\{2\}$}] (4) at (2.4,0.8) {};

  \draw (0) -- (1);
  \draw (0) -- (2);
  \draw (0) -- (4);
  \draw (1) -- (3);
  \draw (2) -- (3);

\end{scope}
}

\begin{scope}[xshift=7.5cm]
\node (matrix) at (0,1.6) {$
{\bf P}_5
=\begin{bmatrix}
1 & 0 & 0 & 0 & 0\\[2pt]
1 & 1 & 0 & 0 & 0\\[2pt]
1 & 0 & 1 & 0 & 0\\[2pt]
1 & 1 & 1 & 1 & 0\\[2pt]
1 & 0 & 0 & 0 & 1
\end{bmatrix}
$};
\end{scope}

\end{tikzpicture}
\vskip.3pc
{\rm Fig.~4. The Pascal poset ${\mathbb P}_5\;(\cong\;{\mathbb S}_5)$ and its Poset matrix.}
\end{center}
\end{example}

Denote by $\mathfrak J({\mathbb P}_n)$ the set of all order ideals of ${\mathbb P}_n$, ordered by inclusion, and by $\mathcal A({\mathbb P}_n)$ the set of all
antichains of ${\mathbb P}_n$. For $A\in\mathcal A({\mathbb P}_n)$, define
$$
\downarrow A:=\{x\in X_n \mid x\preceq a \text{ for all } a\in A\}.
$$
Then the set $\downarrow A$ is downward closed by definition and therefore an order ideal of ${\mathbb P}_n$.

For example, consider the Pascal poset ${\mathbb P}_5$ in Fig. 4. If $A=\{2,4\}\in\mathcal A({\mathbb P}_5)$ then $\downarrow A=\{0,2,4\}$:

\begin{center}
\begin{tikzpicture}[
  scale=0.7,
  node distance=10mm and 12mm,
  every node/.style={inner sep=1pt, minimum size=6mm},
  level distance=1.2cm,
  >=stealth
]

\node (0) at (0,0) {\(\varnothing\)};

\node (a) at (-3,1.8) {\(\{0\}\)};
\node (b) at (-1.5,1.8) {\(\{1\}\)};
\node (c) at (0,1.8) {\(\{2\}\)};
\node (d) at (1.5,1.8) {\(\{3\}\)};
\node (e) at (3,1.8) {\(\{4\}\)};

\node (bc) at (-2,3.6) {\(\{1,2\}\)};
\node (be) at (0,3.6) {\(\{1,4\}\)};
\node (ce) at (2,3.6) {\(\{2,4\}\)};
\node (de) at (4,3.6) {\(\{3,4\}\)};

\node (bce) at (0,5.2) {\(\{1,2,4\}\)};

\draw (0) -- (a);
\draw (0) -- (b);
\draw (0) -- (c);
\draw (0) -- (d);
\draw (0) -- (e);

\draw (b) -- (bc);
\draw (c) -- (bc);

\draw (b) -- (be);
\draw (e) -- (be);

\draw (c) -- (ce);
\draw (e) -- (ce);

\draw (d) -- (de);
\draw (e) -- (de);

\draw (bc) -- (bce);
\draw (be) -- (bce);
\draw (ce) -- (bce);

\end{tikzpicture}

{\text{Fig. 5. $\mathcal{A}({\mathbb P}_5)\cong$ Ideal lattice $\mathfrak{J}({\mathbb P}_5)$}} with $D_P(5)=11$. 
\end{center}
\medskip

A $k$-element antichain ${\cal A}:=\{i_1,\ldots,i_k\}\subseteq X_n$ of the Pascal poset $\mathbb P_n$ corresponds precisely to the induced $k\times k$ identity submatrix of the Pascal matrix $\mathbf P_n$, namely
$$
{\mathbf P}_n[i_1,\ldots,i_k]=I_k.
$$
Equivalently, for any $k\ge 0$ and any $\alpha\in Q_{k,n}$, the condition ${\bf P}_n[\alpha]=I_k$ holds if and only if $\alpha$ corresponds to a $k$-element antichain of the Pascal poset $\mathbb{P}_n$. Consequently, the Dedekind-Pascal number $D_P(n)$ counts all such index sets
$\alpha\in Q_{k,n}$ taken over all $0\le k\le n$.  The following corollary provides a matrix-theoretic interpretation of the
Dedekind-Pascal number $D_P(n)$.

\begin{corollary}
The Dedekind-Pascal number $D_P(n)$ is equal to the total number of principal
induced submatrices of the Pascal matrix ${\bf P}_n$ that are identity
matrices, of all possible sizes $0\le k\le n$, where $I_0$ corresponds to the empty set.
\end{corollary}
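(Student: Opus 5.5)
The plan is to build an explicit bijection between $\mathcal A(\mathbb P_n)$ and the set of all pairs $(k,\alpha)$ with $0\le k\le n$, $\alpha\in Q_{k,n}$ and ${\bf P}_n[\alpha]=I_k$. Then $D_P(n)=|\mathcal A(\mathbb P_n)|$ will equal the number of such index sets. The map is the obvious one: it sends an antichain $\mathcal A=\{i_1<\cdots<i_k\}$ to the index set $\alpha=(i_1,\ldots,i_k)\in Q_{k,n}$, with the empty antichain going to the empty index set, whose $0\times 0$ submatrix we declare to be $I_0$.

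The key step is to check that ${\bf P}_n[\alpha]=I_k$ exactly when $\alpha$ is an antichain of $\mathbb P_n$. This is the same argument as in the proposition characterizing ${\bf P}_{2^n}[\alpha]=I_n$, now carried out inside ${\bf P}_n$ rather than ${\bf P}_{2^n}$.
\begin{itemize}
\item By the definition of the poset matrix of $\mathbb P_n$, the entry $({\bf P}_n)_{i_s,i_t}$ equals $1$ if and only if $i_t\preceq i_s$ in $\mathbb P_n$. Equivalently, by Lucas' theorem or Proposition~\ref{pas}, this holds if and only if ${\rm S}(i_t)\subseteq{\rm S}(i_s)$.
\item The diagonal entries of ${\bf P}_n[\alpha]$ are always $1$, since ${\bf P}_n$ is unit lower triangular.
\item Hence ${\bf P}_n[\alpha]=I_k$ if and only if all off-diagonal entries vanish, that is, if and only if no two distinct $i_s,i_t$ are comparable in $\mathbb P_n$. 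This is precisely the statement that $\{i_1,\ldots,i_k\}$ is an antichain.
\end{itemize}

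It remains to see that the correspondence is a bijection. A principal submatrix is determined by its index set, viewed as an unordered subset of $X_n$ as in the definition of $Q_{k,n}$. So distinct antichains give distinct index sets, and the previous step shows that every index set yielding an identity submatrix comes from an antichain. Summing over $k$ then gives $D_P(n)=\sum_{k=0}^n\bigl|\{\alpha\in Q_{k,n}:{\bf P}_n[\alpha]=I_k\}\bigr|$, which is the claim.

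There is no substantive obstacle. The only point needing care is bookkeeping: the empty antichain must be counted through the convention $I_0$, and principal submatrices must be counted by index set rather than by position in some ordering. With these conventions the count yields, for example, $D_P(5)=11$, consistent with Fig.~5.
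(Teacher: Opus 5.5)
Your proposal is correct and follows essentially the same route as the paper. The paper likewise observes that $\alpha\in Q_{k,n}$ satisfies ${\bf P}_n[\alpha]=I_k$ exactly when $\alpha$ is a $k$-element antichain of $\mathbb P_n$, then sums over $0\le k\le n$; your version simply makes the bijection and the diagonal/off-diagonal check explicit.
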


We end this section by considering the {\it Boolean fixed-point equation} $x{\bf P}_n = x$ $(x\in \mathbb{B}^n)$, where matrix multiplication is taken over the Boolean algebra.
\begin{theorem} For $x=(x_0,\ldots,x_{n-1})\in\mathbb{B}^n$, let ${\rm S}(x):=\{ i\in X_n\mid x_i=1\}$ be the support of $x\in{\mathbb B}^n$. Then $x{\bf P}_n=x$ if and only if ${\rm S}(x)$ is an order ideal of the Pascal poset ${\mathbb P}_n$. Moreover,
\begin{eqnarray}\label{fixed}
D_P(n)=\bigl|\{x\in\mathbb{B}^n : x{\bf P}_n =x\}\bigr|.
\end{eqnarray}
\end{theorem}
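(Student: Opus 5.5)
The plan is to compute the Boolean product $x{\bf P}_n$ coordinatewise and read off its support as a down-closure in ${\mathbb P}_n$. By definition of the poset matrix, $({\bf P}_n)_{ij}=1$ if and only if $j\preceq i$ in ${\mathbb P}_n$; equivalently, by Lucas' theorem, if and only if ${\rm S}(j)\subseteq{\rm S}(i)$. Hence, over ${\mathbb B}$,
$$
(x{\bf P}_n)_j=\sum_{i=0}^{n-1}x_i({\bf P}_n)_{ij}=1 \iff \text{there exists } i\in{\rm S}(x) \text{ with } j\preceq i .
$$
So the first step is to establish the identity
$$
{\rm S}(x{\bf P}_n)=\bigcup_{i\in{\rm S}(x)}\downarrow i,
$$
that is, ${\rm S}(x{\bf P}_n)$ is the order ideal generated by ${\rm S}(x)$. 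This matches the earlier observation that the support of the row $R_i$ of ${\bf P}_n$ is $\downarrow i$, since $x{\bf P}_n$ is the Boolean sum of the rows $R_i$ with $i\in{\rm S}(x)$.

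Next I prove the equivalence. Reflexivity ($({\bf P}_n)_{jj}=1$) always gives ${\rm S}(x)\subseteq{\rm S}(x{\bf P}_n)$. Since two Boolean vectors are equal exactly when their supports are equal, $x{\bf P}_n=x$ holds if and only if the reverse inclusion ${\rm S}(x{\bf P}_n)\subseteq{\rm S}(x)$ holds. By the description above, this says: whenever $i\in{\rm S}(x)$ and $j\preceq i$, we have $j\in{\rm S}(x)$. That is precisely the statement that ${\rm S}(x)$ is an order ideal of ${\mathbb P}_n$, which proves both directions at once.

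For (\ref{fixed}), the map $x\mapsto{\rm S}(x)$ is a bijection from ${\mathbb B}^n$ onto the subsets of $X_n$. By the equivalence just proved, it restricts to a bijection between the solutions of $x{\bf P}_n=x$ and $\mathfrak J({\mathbb P}_n)$. Composing with the standard bijection between order ideals and antichains of a finite poset (an ideal goes to its set of maximal elements, an antichain goes to its down-closure) gives $|\mathfrak J({\mathbb P}_n)|=|\mathcal A({\mathbb P}_n)|=D_P(n)$.

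There is no real obstacle here. The only point needing care is the orientation of the product: $x$ is a row vector multiplying ${\bf P}_n$ on the left, so the entries of $x{\bf P}_n$ are combinations of rows of ${\bf P}_n$, which produces down-sets rather than up-sets.
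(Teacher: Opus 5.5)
Your proof is correct and takes essentially the same route as the paper. Both compute $(x{\bf P}_n)_j$ coordinatewise as the Boolean sum of the $x_i$ over $i$ with $j\preceq i$, use reflexivity for one direction and down-closure for the other, and then pass through the ideal--antichain bijection. Your set identity ${\rm S}(x{\bf P}_n)=\bigcup_{i\in{\rm S}(x)}\downarrow i$ just handles both directions at once, where the paper splits into the cases $x_j=1$ and $x_j=0$.
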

\begin{proof} Let $x=(x_0,\ldots,x_{n-1})\in\mathbb{B}^n$. We first rewrite the fixed-point equation in coordinates.  Recall that ${\bf P}_n$ is the poset matrix of the
Pascal poset ${\mathbb P}_n$, that is, $({\bf P}_n)_{i,j}=1$ if and only if $j \preceq i$ in ${\mathbb P}_n$. Since for each $j\in X_n$,
$$
(x{\bf P}_n)_j=\sum_{i=0}^{n-1}x_i({\bf P}_n)_{i,j}=\sum_{i:j\preceq i} x_i,
$$
it follows that the Boolean fixed point equation $x{\bf P}_n=x$ is equivalent to the coordinate-wise conditions
\begin{eqnarray}\label{fixed-1}
x_j =\sum_{i:j\preceq i} x_i\;\;{\text{for all $j\in X_n$}}.
\end{eqnarray}
Assume that $x{\bf P}_n=x$. Let $i\in {\rm S}(x)$, so $x_i=1$.  
If $j\preceq i$, from (\ref{fixed-1}) we have $x_j=1$, which shows that $j\in {\rm S}(x)$.  
Hence ${\rm S}(x)$ is an order ideal of ${\mathbb P}_n$.

Conversely, assume that ${\rm S}(x)$ is an order ideal of ${\mathbb P}_n$.
Fix $j\in X_n$ and we show that (\ref{fixed-1}) holds. First let $x_j=1$. Since $j\preceq j$ for all $j\in X_n$ it follows that $\sum_{i:j\preceq i} x_i=1$ under Boolean sum, 
so the equality holds in (\ref{fixed-1}).
Now let $x_j=0$. Suppose, to the contrary, that
$\sum_{i:j\preceq i} x_i=1$.
Then there exists some $i$ such that $j\preceq i$ and $x_i=1$, i.e., $i\in {\rm S}(x)$.  
Since ${\rm S}(x)$ is an order ideal and $j\preceq i$, this implies $j\in {\rm S}(x)$, so $x_j=1$, a contradiction.
In both cases (\ref{fixed-1}) holds for every $j$, so $x{\bf P}_n=x$.

Consequently, the set of solutions of the equation $x{\bf P}_n=x$ is in bijection with the set of order ideals of the Pascal poset ${\mathbb P}_n$.
Thus, by the well-known correspondence between order ideals and
antichains, we have
$$
\bigl|\{x\in\mathbb{B}^n : x{\bf P}_n =x\}\bigr|
=|\mathfrak{J}({\mathbb P}_n)|
=\#\{\text{antichains of }{\mathbb P}_n\}=D_P(n),
$$
which proves (\ref{fixed}). Hence the proof is complete.
\end{proof}

For instance, consider the lattice $\mathcal{A}({\mathbb P}_5)$ in  Fig. 5. The Boolean fixed point equation $x{\bf P}_5 = x$ has exactly $11$ solutions.
These solutions are in bijection with the order ideals, or equivalently the antichains, of the Pascal poset $\mathbb P_5$ as shown in Table 6.
\vskip1pc

\begin{center}
\renewcommand{\arraystretch}{1.15}
\begin{tabular}{c|c|c}
\hline
Antichain $\mathcal{A}$ 
& Order ideal $\downarrow\mathcal{A}$ 
& Fixed point $x\in{\mathbb B}^5$ \\ 
\hline
$\varnothing$ 
& $\varnothing$ 
& $(0,0,0,0,0)$ \\[3pt]

$\{0\}$ 
& $\{0\}$ 
& $(1,0,0,0,0)$ \\[3pt]

$\{1\}$ 
& $\{0,1\}$ 
& $(1,1,0,0,0)$ \\[3pt]

$\{2\}$ 
& $\{0,2\}$ 
& $(1,0,1,0,0)$ \\[3pt]

$\{3\}$ 
& $\{0,1,2,3\}$ 
& $(1,1,1,1,0)$ \\[3pt]

$\{4\}$ 
& $\{0,4\}$ 
& $(1,0,0,0,1)$ \\[3pt]

$\{1,2\}$ 
& $\{0,1,2\}$ 
& $(1,1,1,0,0)$ \\[3pt]

$\{1,4\}$ 
& $\{0,1,4\}$ 
& $(1,1,0,0,1)$ \\[3pt]

$\{2,4\}$ 
& $\{0,2,4\}$ 
& $(1,0,1,0,1)$ \\[3pt]

$\{3,4\}$ 
& $\{0,1,2,3,4\}$ 
& $(1,1,1,1,1)$ \\[3pt]

$\{1,2,4\}$ 
& $\{0,1,2,4\}$ 
& $(1,1,1,0,1)$ \\[3pt]

\hline
\end{tabular}
\vskip1pc
Table 6. Antichains of ${\mathbb P}_5$ and coreesponding order ideals and fixed points.
\end{center}

\vskip1pc
We end this section by noting a structural distinction of the Pascal poset
$\mathbb P_n$.
When $n=2^k$, the poset $\mathbb P_n$ is isomorphic to the Boolean lattice
$\mathbb B_k$ and hence $\mathbb P_{2^k}$ forms the Boolean lattice.
For $n\neq 2^k$, the poset $\mathbb P_n$ is generally not a lattice, since
joins or meets may fail to exist.
Nevertheless, our matrix-theoretic formulation via Boolean fixed-point
equations remains valid in this non-lattice setting, providing a natural
extension of the classical Dedekind problem beyond the Boolean lattice case.

\end{document}